\newtheorem{thm}{Theorem}[section]
\newtheorem{lem}[thm]{Lemma}
\newtheorem{cor}[thm]{Corollary}
\theoremstyle{definition}
\newtheorem{rem}[thm]{Remark}
\newtheorem*{rem*}{Remark}
\numberwithin{equation}{section}
\begin{document}
\title{Gradient Estimates and Liouville Theorems for Lichnerowicz-type equation on Riemannian Manifolds}

\author{Youde Wang}
\address{1. School of Mathematics and Information Sciences, Guangzhou University; 2. Hua Loo-Keng Key Laboratory
of Mathematics, Institute of Mathematics, Academy of Mathematics and Systems Science, Chinese Academy
of Sciences, Beijing 100190, China; 3. School of Mathematical Sciences, University of Chinese Academy of Sciences,
Beijing 100049, China.}
\email{wyd@math.ac.cn}

\author{Aiqi Zhang}
\address{School of Mathematics and Information Sciences, Guangzhou University}
\email{zhangaiqi@gzdx.wecom.work}

%\keywords{topological entropy, entropy pair, iterated function system,  measure-theoretic entropy}
\date{\today}

\begin{abstract}
In this paper we consider the gradient estimates on positive solutions to the following elliptic (Lichnerowicz) equation defined on a complete Riemannian manifold $(M,\,g)$:
$$\Delta v + \mu v + a v^{p+1} +b v^{-q+1} =0,$$
where $p\geq-1$, $q\geq1$, $\mu$, $a$ and $b$ are real constants.

In the case $\mu\geq0$ and $b\geq0$ or $\mu<0$, $a>0$ and $b>0$ ($\mu$ has a lower bound), we employ the Nash-Moser iteration technique to obtain some refined gradient estimates of the solutions to the above equation, if $(M,\,g)$ satisfies $Ric \geq -(n-1)\kappa$, where $n\geq3$ is the dimension of $M$ and $\kappa$ is a nonnegative constant, and $\mu$, $a$, $b$, $p$ and $q$ satisfy some technique conditions. By the obtained gradient estimates we also derive some Liouville type theorems for the above equation under some suitable geometric and analysis conditions. As applications, we can derive some Cheng-Yau's type gradient estimates for solutions to the $n$-dimensional Einstein-scalar field Lichnerowicz equation where $n\geq3$.
\end{abstract}

\maketitle

\tableofcontents
%\tableofcontents
%\newpage

\section{Introduction}
In this paper we are concerned with the following elliptic equation defined on a complete Riemannian manifold $(M,\,g)$:
\begin{equation}\label{eqno1.1}
\Delta v+\mu v+ av^{p+1}+bv^{-q+1}=0,
\end{equation}
where $p\geq-1$, $q\geq1$, $\mu$, $a$ and $b$ are some real constants, $\Delta$ is the Laplace-Beltrami operator on $(M,\,g)$ with respect to the metric $g$. It is well-known that this equation is also called Lichnerowicz-type equation.

In the past half century, the above equation (\ref{eqno1.1}) with $\mu=b=0$ includes many important and well-known equations stemming from differential geometry and physics etc. These equations are deeply and extensively studied by many mathematicians. For instance, the works of Schoen and Yau in (\cite{S1, S2, SY}) on conformally flat manifold and Yamabe problem highlighted the importance of studying the distribution solutions of
\begin{equation}\label{eq:1.1*}
\Delta u(x)+u^{(n+2)/(n-2)}(x)=0,
\end{equation}
where $n\geq3$.

From the viewpoint of analysis, Caffarelli, Gidas and Spruck in \cite{CGS} studied non-negative smooth solutions of the conformal invariant equation ($\ref{eq:1.1*}$), and discussed some special form of ($\ref{eqno1.1}$), written by $\Delta u + g(u)=0$, with an isolated singularity at the origin. It is well-known that (\ref{eqno1.1}) with $\mu=b=0$ and $a=a(x)\in C^{\infty}(M)$ is called Lane-Emden equation
\begin{equation}\label{eq:1.2}
\Delta u + a(x) u^{r}=0
\end{equation}
defined on a complete noncompact Riemannian manifold $(M, g)$ where $r$ is a real constant. One also studied the connections between Liouville-type
theorems and local properties of nonnegative solutions to superlinear elliptic problems (see \cite{PQS, Sou}).

If $(M,g)$ is the standard two sphere $S^2$, this equation also has a deep relationship with the stationary solutions to Euler's equation on $S^2$ (such as zonal flows, Rossby-Haurwitz planetary waves and Stuart-type vortices). We refer to \cite{CCKW, CG, R} and references therein for recent developments of stationary solutions of Euler's equation on $S^2$.

Recently, Peng, Wang and Wei \cite{PWW} in 2021 employed suitable auxiliary functions and the maximum principles to consider the gradient estimates of the positive solution to (\ref{eq:1.2}) in the case $a>0$ and $ r <1+\frac{4}{n}$ or $a < 0$ and $r>1$ are two constants. For the case $a>0$, their results improve considerably the previous known results and supplements the results for the case $\dim(M)\leq 2$. For the case $a<0$ and $r>1$, they also improved considerably the previous related results. When the Ricci curvature of $(M,g)$ is nonnegative, a Liouville-type theorem for the above equation was established. For more details we refer to \cite{MHL,PWW} and references therein.

Very recently, Y.-D. Wang and G.-D. Wei \cite{WW} adopted the Nash-Moser iteration to study the nonexistence of positive solutions to the above Lane-Emden equation with a positive constant $a$, i.e.,
$$\Delta u+au^{r}=0$$
defined on a noncompact complete Riemannian manifold $(M, g)$ with $\dim(M)=n \geq 3$, and improve some results in \cite{PWW}. Later, inspired by the work of X.-D. Wang and L. Zhang \cite{WZ}, J. He, Y.-D. Wang and G.-D. Wei \cite{He-Wang-Wei} also discussed the gradient estimates and Liouville type theorems for the positive solution to the following generalized Lane-Emden equation
$$\Delta_pu + au^r=0.$$
Especially, the results obtained in \cite{WW} are also improved. It is shown in \cite{He-Wang-Wei} that, if the Ricci curvature of manifold is nonnegative and
$$r\in\left(-\infty,\, (n+3)/(n-1)\right)$$
the above equation with $a>0$ and $p=2$ does not admit any positive solution.

On the other hand, an analogue but more general form of Yamabe's equation is the well-known Einstein-scalar field Lichnerowicz equation. This equation arises from the Hamiltonian constraint equation for the Einstein-scalar field system in general relativity \cite{CGi, CM, Lic, York}. It is well known that the initial data for solving the nonlinear wave system of the Einstein field equations in general relativity has to satisfy the Einstein-scalar field Lichnerowicz equation. In the case the underlying manifold $M$ with dimension $n \geq 3$, the Einstein-scalar field Lichnerowicz equation takes the following form
$$\Delta u + \mu(x)u + A(x)u^r + B(x)u^{-s} = 0,$$
where $\mu(x)$, $A(x)$ and $B(x)$ are smooth functions on $M$, which are of obvious geometric or physical significance, and $r=(n+2)/(n-2)$ and $s=(3n-2)/(n-2)$; while on 2-manifolds, we know that the Einstein-scalar field Lichnerowicz equation is given as follows
$$\Delta u + A(x)e^{2u} + B(x)e^{-2u} + D(x) = 0.$$
Unless otherwise stated, solutions are always required to be smooth and positive. Applying the method used in \cite{HPP, I, IMP, MW1}, one can easily obtain the solvability of the equation provided $(M,g)$ is a compact manifold. For more details we refer to \cite{C, CGi, CM, DG, HPP, I, IMP, MW, N} and references therein. Gnerally, we call the above equation as the Einstein-scalar field Lichnerowicz-type equation if $\mu(x)$, $A(x)$ and $B(x)$ are some given functions without any geometric or physical constraints.

In particular, Song and Zhao \cite{SZ} studied the gradient estimates of positive solutions to the above Einstein-scalar field Lichnerowicz-type equation defined on a compact Riemannian manifold $(M, g)$ with $\dim(M)\geq 3$. The estimate reads as
$$|\nabla_g u|^2 \leq C(R, A(x), B(x), h(x))(u^2 + u^{r+1} + u^{1-s}),$$
which is not Cheng-Yau type estimate. The proof of the above result is in the same spirit as in \cite{MW}, where Ma and Wei studied some properties and gradient estimates of positive solutions to the equation $$\Delta u = u^\tau$$ in $\Omega$, where $\Omega\subset\mathbb{R}^N$ is a regular domain and $\tau< 0$. Moreover, in \cite{SZ} the corresponding parabolic counterpart was also considered.

Later, L. Zhao \cite{Z} discussed the Cheng-Yau type gradient estimates of the above Lichnerowicz-type equation defined on a complete Riemannian manifold $(M, g)$ with $\dim(M)\geq 3$, for more details we refer to \cite{ZY,Z}.

The parabolic counterpart of the above equation (\ref{eqno1.1}) was considered by Dung, Khan and Ng\^o \cite{DKN}. More concretely, let $(M, g, e^{-f}dv)$ be a complete, smooth metric measure space with the Bakry-\'Emery Ricci curvature bounded from below, Dung et al have ever studied the following general $f$-heat equations
$$u_t = \Delta_f u + \lambda u\ln u + \mu u + Au^r + Bu^{-s}.$$
Suppose that $\lambda$, $\mu$, $A$, $B$, $r$ and $s$ are constants with $A \leq 0$, $B\geq 0$, $r\geq 1$, and $s \geq 0$. If $u \in (0, 1]$ is a smooth solution to the above general $f$-heat equation, they obtained various gradient estimates for the bounded positive solutions, which depend on the bounds of positive solution and the Laplacian of the distance functions on domain manifolds.
Moreover, they also considered the gradient estimate of bounded positive solution $u\in [1, C)$ to the following equation on a Riemann surface
$$u_t = \Delta_f u + Ae^{2u} + Be^{-2u} + D,$$
where $A$, $B$ and $D$ are constants. Besides, Some mathematicians (see \cite{W, Y-Z}) also paid attention to a similar nonlinear parabolic equation defined on some kind of smooth metric measure space.

Recently, Huang and Wang in \cite{P. Huang & Y. Wang2022} studied the positive solutions to a class of general semilinear elliptic equations $$\Delta u(x)+uh(\ln u)=0$$ defined on a complete Riemannian manifold $(M, g)$ with $Ric(g)\geq -Kg$, and obtained the Cheng-Yau type gradient estimates of positive solutions to these equations which do not depend on the bounds of the solutions and the Laplacian of the distance function on $(M, g)$. They also obtained some Liouville-type theorems for these equations when $(M, g)$ is noncompact and $Ric(g)\geq 0$ and established some Harnack inequalities as consequences. Then, as applications of main theorem they extended their techniques to the Lichnerowicz-type equations
$$\Delta u + \mu u+ \lambda u\ln u + a u^{p+1}+ b u^{q+1}=0,$$
the Einstein-scalar field Lichnerowicz-type equations
$$\Delta u + \mu u + au^{p+1}+ bu^{q+1}=0$$
with $\dim(M)\geq 3$ and the two dimensional Einstein-scalar field Lichnerowicz equation
\begin{equation}\label{eq:1.4}
\Delta u + A e^{2u} + B e^{-2u} + D = 0,
\end{equation}
and obtained some similar gradient estimates and Liouville theorem under some suitable analysis conditions on these equations.

It seems that except for the results in \cite{P. Huang & Y. Wang2022, Z} there is no any other Cheng-Yau type gradient estimates and Liouville type theorem on the solution to the Einstein-scalar field Lichnerowicz type equation defined on a noncompact complete Riemannian manifold with $\dim(M)\geq 3$ by the authors' best knowledge. Actually, the gradient estimates established in \cite{P. Huang & Y. Wang2022, Z} are not exactly of the Cheng-Yau type. So, in this paper we are intended to establish some more explicit and concise gradient estimates of obvious geometric significance and then derive consequently some Liouville type theorems.

In this paper we only focus on \eqref{eqno1.1} to discuss the gradient estimate of its positive solutions and the nonexistence of its solutions by Nash-Moser iteration method and some analytical technique adopted in \cite{He-Wang-Wei}, although it is of independent interest that one studies various properties of solutions to a class of equations $\Delta u + u\tilde{h}(x, \ln u)=0$ defined on a complete Riemannian manifold and our method employed here is also used to study this class of equations.

Moreover, we can also use the Nash-Moser iteration method to deal with the following
\begin{equation*}
\Delta_m v+\mu v+ av^{p+1}+bv^{-q+1}=0,
\end{equation*}
where $\Delta_m$ ($m>1$) is the $m$-Laplace operator, and deduce some similar results with that for \eqref{eqno1.1}. We will discuss the gradient estimates and some related properties for its solutions in a forthcoming paper.
\medskip

In the sequel, we always let $(M,\,g)$ be a complete Riemannian manifold with Ricci curvature $Ric \geq -(n-1)\kappa$. For the sake of convenience, we need to make some conventions firstly. Throughout this paper, unless otherwise mentioned, we always assume $\kappa\geq0$, $n\geq3$ is the dimension of $M$, $\mu$, $a$, $b$, $p\geq-1$, and $q\geq1$ are some real constants. Moreover, we denote:
$$B_{r}=B(o,\,r),~\mbox{any}~r>0.$$

Now, we are ready to state our results.

\begin{thm}
\label{thm1}
Let $(M,\,g)$ be a complete Riemannian manifold with $Ric \geq -(n-1)\kappa$. Assume that $v$ is a smooth positive solution of (\ref{eqno1.1}) on the geodesic ball $B_R\subset M$ with $q\geq 1$, $\mu\geq0$ and $b\geq0$.
If one of the  following conditions holds true:
\begin{enumerate}
\item  $a\left(\frac{2}{n-1}-p\right)\geq0\quad\text{and}\quad p\geq -1$;
\item  $0< p< \frac{4}{n-1}$,
\end{enumerate}
then we  have:
\begin{equation}\label{eqno1.2}
\frac{\left|\nabla v\right|^2}{v^2}\leq c(n,\,p)\frac{\left(1+\sqrt\kappa R\right)^2}{R^2}\quad\mbox{on}~B_{R/2}.
\end{equation}
\end{thm}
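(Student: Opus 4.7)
The plan is to linearize by setting $u=\ln v$, which turns \eqref{eqno1.1} into
$$\Delta u+|\nabla u|^{2}+\mu+ae^{pu}+be^{-qu}=0.$$
Writing $w:=|\nabla u|^{2}=|\nabla v|^{2}/v^{2}$, the goal is to bound $w$ pointwise on $B_{R/2}$. I would start from the refined Bochner formula (decomposing the Hessian along the direction $\nabla u/|\nabla u|$ to gain the factor $\frac{1}{n-1}$ in place of $\frac{1}{n}$), then substitute $\Delta u=-w-\mu-ae^{pu}-be^{-qu}$ and $\nabla\Delta u=-\nabla w+(qbe^{-qu}-pae^{pu})\nabla u$ into it, and expand.

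After collecting, the linear-in-$w$ coefficient arising from cross terms of $(\Delta u)^{2}$ and from $\langle\nabla u,\nabla\Delta u\rangle$ works out to
$$\frac{2\mu}{n-1}+\Bigl(\frac{2}{n-1}-p\Bigr)ae^{pu}+\Bigl(\frac{2}{n-1}+q\Bigr)be^{-qu},$$
while the same expansion produces the nonnegative square contributions $\frac{w^{2}}{n-1}$, $\frac{(ae^{pu})^{2}}{n-1}$ and $\frac{(be^{-qu})^{2}}{n-1}$. By the standing assumptions $\mu\geq 0$, $b\geq 0$, $q\geq 1$, the $\mu$- and $b$-contributions above are nonnegative. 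Hypothesis $(1)$ makes the $a$-contribution nonnegative as well; hypothesis $(2)$ does not, but the quadratic form in the variables $w$ and $ae^{pu}$ with diagonal entries $\frac{1}{n-1}$ and off-diagonal entry $\tfrac{1}{2}\bigl(\tfrac{2}{n-1}-p\bigr)$ is positive definite exactly when $\bigl|\tfrac{2}{n-1}-p\bigr|<\tfrac{2}{n-1}$, i.e.\ $0<p<\tfrac{4}{n-1}$. Thus in either case, after absorbing the additional Hessian cross term from the refined Bochner into the drift via Cauchy--Schwarz, one obtains a differential inequality
$$\tfrac{1}{2}\Delta w\;\geq\;c(n,p)\,w^{2}-\langle\nabla u,\nabla w\rangle-(n-1)\kappa\,w,\qquad c(n,p)>0.$$

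With this key inequality in hand, the plan is to execute a Nash--Moser iteration in the spirit of \cite{He-Wang-Wei,WW}. Pick a smooth cutoff $\phi$ with $\phi\equiv 1$ on $B_{R/2}$, supported in $B_{R}$, satisfying $|\nabla\phi|\leq C/R$ and (by the Laplacian comparison under $Ric\geq-(n-1)\kappa$) $\Delta\phi\geq -C(1+\sqrt{\kappa}R)/R^{2}$. Testing the inequality against $\phi^{2\beta}w^{\alpha}$ for $\alpha\geq 0$ and $\beta$ large, integrating by parts on the $\Delta w$ term, and absorbing $\langle\nabla u,\nabla w\rangle$ into the gradient term of $w^{(\alpha+2)/2}$ by Cauchy--Schwarz yields a reverse-H\"older estimate schematically of the form
$$\int_{B_{R}}\phi^{2\beta}w^{\alpha+2}\;\leq\;\frac{C(n,p)(\alpha+1)^{A}(1+\sqrt{\kappa}R)^{2}}{R^{2}}\int_{B_{R}}w^{\alpha+1}.$$
Combined with the Saloff-Coste local Sobolev inequality valid under $Ric\geq-(n-1)\kappa$, iterating over $\alpha_{k}=\gamma^{k}\alpha_{0}$ with $\gamma>1$ and a sequence of concentric balls shrinking to $B_{R/2}$ yields the $L^{\infty}$ bound claimed in \eqref{eqno1.2}.

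The main obstacle is the algebraic verification of positivity of $c(n,p)$: this is precisely where the two hypotheses $(1)$ and $(2)$ enter, and where the refined Bochner (with $n-1$ rather than the naive $n$) is essential, since the boundary of the admissible region for $p$ in (2) is exactly $4/(n-1)$. Once positivity is secured, the Nash--Moser iteration is a standard but delicate procedure; one must keep careful track of how the reverse-H\"older constant depends on $\alpha$ so that the iterated $L^{p_{k}}$ bounds actually converge to an $L^{\infty}$ bound. No cancellation among the exponential-in-$u$ terms $ae^{pu}$ and $be^{-qu}$ is required, since positivity of each relevant coefficient is preserved separately by the sign assumptions on $\mu$, $b$ and the restrictions on $p$.
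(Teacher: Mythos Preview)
Your overall strategy matches the paper's, and for hypothesis (1) your pointwise inequality is correct. The gap is in case (2): your claim that the diagonal entry for $(ae^{pu})^{2}$ in the quadratic form equals $\tfrac{1}{n-1}$ is not right. After you split the Hessian as $u_{11}^{2}+\tfrac{1}{n-1}(\Delta u-u_{11})^{2}$ there is still a cross term $\tfrac{2}{n-1}S\,u_{11}$ (with $S=\mu+ae^{pu}+be^{-qu}$); completing the square in $u_{11}$ against the available $\tfrac{n}{n-1}u_{11}^{2}$ reduces the $S^{2}$ coefficient from $\tfrac{1}{n-1}$ down to $\tfrac{1}{n}$. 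With that correct coefficient, the positivity condition becomes $\bigl(\tfrac{2}{n-1}-p\bigr)^{2}<\tfrac{4}{n(n-1)}$, i.e.\ $|p-\tfrac{2}{n-1}|<\tfrac{2}{\sqrt{n(n-1)}}$, which is a strict subinterval of $(0,\tfrac{4}{n-1})$. So the pointwise inequality $\tfrac12\Delta w\geq c(n,p)w^{2}-\langle\nabla u,\nabla w\rangle-(n-1)\kappa w$ with $c(n,p)>0$ simply fails near the endpoints $p=0$ and $p=\tfrac{4}{n-1}$, and your iteration never gets off the ground there.

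The paper closes this gap by working not with $\Delta f$ but with $\Delta(f^{\iota})/(\iota f^{\iota-1})=\Delta f+(\iota-1)f^{-1}|\nabla f|^{2}$. Since $|\nabla f|^{2}\geq 4f\,u_{11}^{2}$, this contributes an extra $4(\iota-1)u_{11}^{2}$, boosting the $u_{11}^{2}$ coefficient from $\tfrac{2n}{n-1}$ to $4(\iota-1)+\tfrac{2n}{n-1}$; after completing the square the $S^{2}$ coefficient becomes $\tfrac{2(2\iota-1)}{2(\iota-1)(n-1)+n}$, which tends to $\tfrac{2}{n-1}$ as $\iota\to\infty$. Choosing $\iota=\iota(n,p)$ large then makes $\rho(n,p,\iota)=\tfrac{2}{n-1}-\tfrac{2(\iota-1)(n-1)+n}{2(2\iota-1)}\bigl(\tfrac{2}{n-1}-p\bigr)^{2}>0$ for every $p\in(0,\tfrac{4}{n-1})$. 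Equivalently, one can retain the $u_{11}^{2}$ term in the pointwise inequality and absorb it only \emph{after} multiplying by the test function $f^{\theta}\eta^{2}$, using the $\theta\!\int f^{\theta-1}|\nabla f|^{2}\eta^{2}$ that arises from integration by parts; but either way you cannot discard $u_{11}$ before bringing in the large exponent.
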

Immediately, we have the following direct corollary:
\begin{cor}\label{cor1}
Let $(M,\,g)$ be a noncompact complete Riemannian manifold with nonnegative Ricci curvature. The equation (\ref{eqno1.1}) with $q\geq 1$, $\mu\geq0$ and $b\geq0$ does not admit any positive solution if one of the following conditions holds true:
\begin{enumerate}
\item $a>0$ and $-1\leq p <\frac{4}{n-1}$;
\item $a=0$, $\mu+b\neq0$, $p\geq -1$;
\item $a<0$, $b=\mu=0$ and $p>0$.
\end{enumerate}

Moreover, we can conclude that any positive solutions to (\ref{eqno1.1}) must be a constant if $b+\mu\neq0$ and $p>0$ in the case \eqref{eqno1.1} with $\mu\geq0$, $a<0$, $b\geq0$ and $q\geq1$.
\end{cor}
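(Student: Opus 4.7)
The natural approach is to exploit Theorem \ref{thm1} in the regime $\kappa=0$ and push the radius $R$ to infinity. Since $(M,g)$ is noncompact and complete, $B_R$ can be taken arbitrarily large, and the right-hand side of \eqref{eqno1.2} collapses to zero, forcing $|\nabla v|\equiv 0$ and hence $v$ constant. Once that is known, the Lichnerowicz-type equation reduces to the algebraic identity
\begin{equation*}
\mu + a v^{p} + b v^{-q}=0
\end{equation*}
(after dividing by the positive constant $v$), and each of the three nonexistence statements, as well as the ``Moreover'' statement, becomes a matter of inspecting signs in this algebraic relation.

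The first task, and the only one that requires genuine bookkeeping, is to verify that under each hypothesis of the corollary, one of the two alternatives (1)--(2) of Theorem \ref{thm1} is satisfied. For case (1), I would split $-1\le p<4/(n-1)$ into the subranges $-1\le p\le 2/(n-1)$ (where $a>0$ and $2/(n-1)-p\ge 0$ trigger alternative (1)) and $2/(n-1)<p<4/(n-1)$ (where alternative (2) applies directly). Case (2) follows immediately from alternative (1) because $a=0$ makes the product $a(2/(n-1)-p)$ vanish. Case (3) and the ``Moreover'' statement both correspond to $a<0$; here alternative (1) covers the range $p\ge 2/(n-1)$ (since $2/(n-1)-p\le 0$ flips the sign in our favour), while alternative (2) covers $0<p<2/(n-1)$, together exhausting $p>0$.

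With the gradient estimate in hand, the final step is to read off contradictions. In case (1), $\mu+av^p+bv^{-q}\ge av^p>0$; in case (2), the expression equals $\mu+bv^{-q}$, which is strictly positive because $\mu+b\neq 0$ with both nonnegative; in case (3), it equals $av^p<0$. For the ``Moreover'' statement, the argument already shows $v$ is constant; the hypothesis $b+\mu\neq 0$ is not needed to prove constancy itself but guarantees a nontrivial algebraic equation for the constant, which I would simply record.

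The main obstacle is essentially bookkeeping rather than a substantive analytic difficulty: one must carefully check that the sign of $a$ combined with the range of $p$ in each clause of the corollary actually falls inside the disjunction of hypotheses of Theorem \ref{thm1}. Once this case analysis is laid out, the Liouville conclusions follow mechanically from letting $R\to\infty$ and substituting a constant into \eqref{eqno1.1}.
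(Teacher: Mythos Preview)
Your proposal is correct and follows essentially the same approach as the paper: apply Theorem~\ref{thm1} with $\kappa=0$, let $R\to\infty$ to force $v$ to be constant, and then check the resulting algebraic relation $\mu+av^{p}+bv^{-q}=0$ case by case. Your explicit bookkeeping---splitting the range of $p$ to verify that each hypothesis of the corollary falls under one of the two alternatives of Theorem~\ref{thm1}---is in fact more detailed than the paper's own proof, which leaves that verification implicit.
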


\begin{rem}
It is worthy to point out that the above results have been established in \cite{He-Wang-Wei} if $\mu=b=0$ in the equation \eqref{eqno1.1}.
\end{rem}

On the other hands, we consider (1.1) with $\mu<0$, and we also get the following results:
\begin{thm}\label{thm2}
Let $(M,\,g)$ be a complete Riemannian manifold with $Ric \geq -(n-1)\kappa$.
Assume that $v$ is a smooth positive solution of (\ref{eqno1.1})on the geodesic ball $B_R\subset M$. For any $q\geq1$, $a>0$, and $b>0$, if
$$\max\{-a,\,-b\}<\mu<0\quad\mbox{and}\quad 0< p<\frac{4}{n-1}\left(1-\frac{\mu}{\max\{-a,\,-b\}}\right),$$
then there holds true on $B_{R/2}$
\begin{equation}\label{eqno1.5}
\frac{\left|\nabla v\right|^2}{v^2}\leq  c\left(n,\,p,\,\frac{\mu}{\max\{-a,\,-b\}}\right)\frac{\left(1+\sqrt\kappa R\right)^2}{R^2},\
\end{equation}
where $c\left(n,\,p,\,\frac{\mu}{\max\{-a,\,-b\}}\right)$ is a positive constant which depends on $n$, $p$ and $\frac{\mu}{\max\{-a,\,-b\}}$.
\end{thm}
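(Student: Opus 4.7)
The plan is to reuse the Nash--Moser iteration template that is already in play for Theorem \ref{thm1}, with a sharper algebraic step to handle the negative lower-order term. Setting $u=\log v$, the equation rewrites as $\Delta u+|\nabla u|^{2}+\mu+av^{p}+bv^{-q}=0$. Writing $f=|\nabla u|^{2}$, the Bochner formula, the Ricci lower bound $\mathrm{Ric}\ge -(n-1)\kappa$, and the trace bound $|\nabla^{2}u|^{2}\ge (\Delta u)^{2}/n$, combined with the substitution $\Delta u=-(\mu+av^{p}+bv^{-q})-f$, yield a pointwise differential inequality of the schematic form
\begin{equation*}
\tfrac{1}{2}\Delta f + \langle\nabla u,\nabla f\rangle \;\ge\; \tfrac{1}{n}f^{2} + \Phi(v)f + \tfrac{1}{n}(\mu+av^{p}+bv^{-q})^{2} - (n-1)\kappa f,
\end{equation*}
with $\Phi(v)=\tfrac{2\mu}{n}+av^{p}\bigl(\tfrac{2}{n}-p\bigr)+bv^{-q}\bigl(\tfrac{2}{n}+q\bigr)$. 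The $bv^{-q}$ contributions are unambiguously favourable since $b>0$ and $q\ge 1$.

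Next I would carry out the algebraic step that is the heart of the theorem. Denote $M=\max\{-a,-b\}<0$. The bad contributions are $\tfrac{2\mu}{n}f$ (because $\mu<0$) and, when $p>2/n$, the piece $av^{p}(\tfrac{2}{n}-p)f$. To absorb them I would expand the square $(\mu+av^{p}+bv^{-q})^{2}/n$, apply the completing-the-square bound $(x+y)^{2}\ge (1-\theta)x^{2}+(1-\theta^{-1})y^{2}$ with a parameter $\theta=\theta(n,p,\mu/M)\in(0,1)$ chosen to group $av^{p}+\mu$ and $bv^{-q}+\mu$ into nonnegative blocks, and then combine the leftover linear-in-$f$ terms with the $f^{2}/n$ on the right. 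The quantitative threshold $0<p<\tfrac{4}{n-1}(1-\mu/M)$ is precisely what makes this rearrangement close: it is the sharp condition under which the $f^{2}$ coefficient remains strictly positive after paying for the $\mu$-deficit and the negative part of $\Phi(v)f$. The output is a clean pointwise inequality
\begin{equation*}
\tfrac{1}{2}\Delta f + \langle\nabla u,\nabla f\rangle \;\ge\; \tfrac{\delta}{n}f^{2} - C_{1}\kappa f - C_{2},
\end{equation*}
for some $\delta,C_{1},C_{2}>0$ depending only on $n$, $p$ and $\mu/M$.

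Finally I would feed this inequality into Moser iteration, following exactly the scheme used for Theorem \ref{thm1} in \cite{He-Wang-Wei}. Pick a geodesic cutoff $\eta$ supported in $B_{R}$ with $\eta\equiv 1$ on $B_{R/2}$, $|\nabla\eta|\le C/R$, and, via the Laplacian comparison theorem, $\Delta\eta\ge -C(1+\sqrt{\kappa}R)/R^{2}$. Test the inequality against $f^{\beta-1}\eta^{2\alpha}$ for $\beta>1$ and $\alpha$ large, integrate by parts on $B_{R}$, use Cauchy--Schwarz with weight to swallow the drift $\langle\nabla u,\nabla f\rangle$ into the $f^{2}$-term, and apply the Saloff-Coste local Sobolev inequality (available because $\mathrm{Ric}\ge -(n-1)\kappa$) to obtain a reverse-H\"older-type step. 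Iterating $\beta\to\infty$ through geometric sequences of radii $r_{k}\downarrow R/2$ produces the $L^{\infty}$ bound
\begin{equation*}
\sup_{B_{R/2}}f \;\le\; c(n,p,\mu/M)\,\frac{(1+\sqrt{\kappa}R)^{2}}{R^{2}},
\end{equation*}
which is exactly \eqref{eqno1.5}.

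The main obstacle is the middle algebraic step: verifying that the explicit threshold $p<\tfrac{4}{n-1}(1-\mu/M)$ is sharp enough to force a strictly positive coefficient of $f^{2}$ after the completion of squares in $\mu,\,av^{p},\,bv^{-q}$. Once that inequality is established, the differential inequality has the same structure as in the $\mu\ge 0$ case, and the Nash--Moser machinery of Theorem \ref{thm1} transfers essentially verbatim, with the iteration constants now depending on the ratio $\mu/M$ in addition to $n$ and $p$.
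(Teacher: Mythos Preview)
Your overall architecture---reduce to a differential inequality for $f=|\nabla u|^2$, absorb the bad $\mu$-term algebraically, then Moser iterate---is correct and matches the paper. However, there is a genuine gap at the step you flag as ``the main obstacle'': with the crude trace bound $|\nabla^2 u|^2\ge (\Delta u)^2/n$ that you wrote, the completion of squares does \emph{not} close at the threshold $p<\tfrac{4}{n-1}(1-\mu/M)$. If you run your own algebra (lower-bound $\mu+av^p+bv^{-q}\ge(1-\delta)(av^p+bv^{-q})$ with $\delta=\mu/M$, set $\tau=(1-\delta)(av^p+bv^{-q})$, and complete the square in $\tau$), the coefficient of $f^2$ becomes $\tfrac{1}{n}-\tfrac{n}{4}\bigl(\tfrac{2}{n}-\tfrac{p}{1-\delta}\bigr)^2$, which is positive only for $p<\tfrac{4}{n}(1-\delta)$, strictly weaker than the stated range.

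To reach $\tfrac{4}{n-1}$ the paper uses two ingredients you omitted. First, a refined Hessian splitting in a frame with $e_1=\nabla u/|\nabla u|$: one keeps $u_{11}^2$ separately and applies Cauchy--Schwarz only to the remaining $n-1$ diagonal entries, which already upgrades the $f^2$-coefficient from $\tfrac{1}{n}$ to $\tfrac{2}{n-1}$ and leaves a controllable $u_{11}$-quadratic. Second, and crucially, one works with $\Delta(f^\iota)$ for large $\iota\ge 1$: the identity $\Delta(f^\iota)/(\iota f^{\iota-1})=\Delta f+(\iota-1)f^{-1}|\nabla f|^2$ together with $|\nabla f|^2\ge 4f\,u_{11}^2$ injects an extra $4(\iota-1)u_{11}^2$ into the inequality, and after completing the square in $u_{11}$ the coefficient of $(\mu+av^p+bv^{-q})^2$ rises from $\tfrac{2}{n}$ toward $\tfrac{2}{n-1}$ as $\iota\to\infty$. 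Only with both refinements does the final $f^2$-coefficient, namely $\tfrac{2}{n-1}-\tfrac{2(\iota-1)(n-1)+n}{2(2\iota-1)}\bigl(\tfrac{2}{n-1}-\tfrac{p}{1-\delta}\bigr)^2$, stay positive precisely on $0<p<\tfrac{4}{n-1}(1-\delta)$. Your proposed $(x+y)^2\ge(1-\theta)x^2+(1-\theta^{-1})y^2$ trick and the extraneous constant $-C_2$ are not what is needed here; once the correct pointwise inequality (with no constant term) is in hand, the Saloff--Coste/Moser iteration goes through exactly as in Theorem~\ref{thm1}.
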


\begin{cor}\label{cor2}
Let $(M,\,g)$ be a noncompact complete Riemannian manifold with nonnegative Ricci curvature. The equation (\ref{eqno1.1}) with $a>0,\,b>0$, $\max\{-a,\,-b\}<\mu<0$,  $q\geq 1$, and $0\leq p<\frac{4}{n-1}\left(1-\frac{\mu}{\max\{-a,\,-b\}}\right)$ does not admit any positive solution.
\end{cor}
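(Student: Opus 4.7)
The plan is to combine the gradient estimate of Theorem~\ref{thm2} with a short algebraic obstruction ruling out positive constant solutions. Let $v>0$ be a smooth solution of \eqref{eqno1.1} on the entire noncompact manifold $M$ under the stated hypotheses. Since $\mathrm{Ric}\ge 0$, I take $\kappa=0$ in Theorem~\ref{thm2}: for any fixed $x_0\in M$ and any $R>0$, applying \eqref{eqno1.5} on $B_R(x_0)$ yields
\begin{equation*}
\frac{|\nabla v|^2}{v^2}(x_0)\;\le\; \frac{c\!\left(n,\,p,\,\tfrac{\mu}{\max\{-a,-b\}}\right)}{R^2},
\end{equation*}
and letting $R\to\infty$ forces $|\nabla v|\equiv 0$. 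Hence any potential solution must be a positive constant $v\equiv c$.

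The second step is to show that no such constant can solve the equation. Plugging $v\equiv c$ into \eqref{eqno1.1} and dividing by $c>0$ reduces matters to the algebraic identity $ac^{p}+bc^{-q}=-\mu$. The assumption $\max\{-a,-b\}<\mu<0$ is equivalent to $\min\{a,b\}>-\mu>0$, so I plan to split into the cases $c\ge 1$ and $0<c<1$. In the first case, $c^p\ge 1$ (using $p\ge 0$) gives $ac^p+bc^{-q}\ge a>-\mu$; in the second, $c^{-q}>1$ (using $q\ge 1$) gives $ac^p+bc^{-q}>b>-\mu$. Either way the required equality fails, producing the desired contradiction.

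The only subtle point is the boundary case $p=0$, where Theorem~\ref{thm2} is stated for $p>0$ strictly and so does not apply directly. I would handle it by regrouping the equation as $\Delta v+(\mu+a)v+bv^{-q+1}=0$; since $\mu+a>0$ (from $\mu>-a$) and $b>0$, this fits Theorem~\ref{thm1} with the $v^{p+1}$-coefficient taken to be zero (so condition~(1) there is trivially satisfied), and the same limit $R\to\infty$ again forces $v$ to be a positive constant $c$. The resulting algebraic equation $(\mu+a)+bc^{-q}=0$ is then impossible because both summands are strictly positive. The main obstacle I anticipate is purely bookkeeping of this borderline case; the heart of the corollary lies in the sharp window $-\mu<\min\{a,b\}$, which is exactly what makes the two-case algebraic dichotomy close up, and the role of the upper bound $p<\tfrac{4}{n-1}(1-\tfrac{\mu}{\max\{-a,-b\}})$ is simply to guarantee the gradient estimate of Theorem~\ref{thm2}.
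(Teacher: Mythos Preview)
Your proof is correct and follows essentially the same route as the paper: apply the gradient estimate with $\kappa=0$, let $R\to\infty$ to force $v$ constant, then observe that no positive constant can satisfy the equation. Your argument is in fact slightly more complete than the paper's own proof: the paper simply asserts $\mu+av^{p}+bv^{-q}\neq 0$ without justification and only writes the proof for $0<p$, whereas you supply the clean dichotomy $c\ge 1$ versus $0<c<1$ (using $\min\{a,b\}>-\mu$) and separately dispatch the borderline $p=0$ by absorbing $av$ into the linear term and invoking Theorem~\ref{thm1}.
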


Now, if we let $p=\frac{4}{n-2}$ and $q=\frac{4(n-1)}{n-2}$ in (\ref{eqno1.1}), then (\ref{eqno1.1}) is exactly the Einstein-scalar field Lichnerowicz equation. It is easy to see that for this special case
$$p=\frac{4}{n-2}>\frac{4}{n-1}\quad\mbox{and}\quad q=\frac{4(n-1)}{n-2}\in(1,\,+\infty).$$
Hence, as a direct corollary of the above Theorem \ref{thm1}, Corollary \ref{cor1}, Theorem \ref{thm2} and Corollary \ref{cor2}, we have

\begin{thm}\label{thm3}
{\bf(Einstein-scalar field Lichnerowicz-type equation)}
Let $(M,\,g)$ be a $n$-dimensional complete Riemannian manifold with $Ric\geq-(n-1)\kappa$. Assume that $v$ is a smooth positive solution of the Einstein-scalar field Lichnerowicz-type equation on the geodesic ball $B_R\subset M$. For any $\mu\geq0$, $a\leq0$ and $b\geq0$, on $B_{R/2}$ there holds
$$
\frac{\left|\nabla v\right|^2}{v^2}\leq  c(n)\frac{\left(1+\sqrt\kappa R\right)^2}{R^2},
$$
where $c(n)$ is the same as in Theorem \ref{thm1} with $p=\frac{4}{n-2}$. Moreover, if $(M,\,g)$ is a noncompact complete manifold with nonnegative Ricci curvature, then any positive solution to the Einstein-scalar field Lichnerowicz equation with $\mu\geq0$, $a\leq0$ and $b\geq0$ is a constant.
\end{thm}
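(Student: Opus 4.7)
The plan is to show that Theorem \ref{thm3} is a direct corollary of Theorem \ref{thm1} (case (1)) together with Corollary \ref{cor1}, so the proof reduces to checking that the special exponents $p=\tfrac{4}{n-2}$ and $q=\tfrac{4(n-1)}{n-2}$ satisfy the hypotheses that were already introduced, and to splitting the Liouville conclusion into cases on the signs of $a$, $b$, $\mu$.

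First I would verify the gradient estimate. The key computation is
\[
\frac{2}{n-1}-p \;=\; \frac{2}{n-1}-\frac{4}{n-2} \;=\; \frac{-2n}{(n-1)(n-2)} \;<\;0
\]
for $n\geq 3$. Since by hypothesis $a\leq 0$, this forces $a\bigl(\tfrac{2}{n-1}-p\bigr)\geq 0$, and clearly $p=\tfrac{4}{n-2}\geq -1$. Together with $q=\tfrac{4(n-1)}{n-2}\geq 1$, $\mu\geq 0$, and $b\geq 0$, all the hypotheses of Theorem \ref{thm1} (1) are met, so the inequality for $|\nabla v|^2/v^2$ on $B_{R/2}$ is inherited verbatim, with the same dimensional constant $c(n)$ evaluated at $p=\tfrac{4}{n-2}$.

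Next I would dispose of the Liouville part by splitting into four subcases (using $\mu\geq 0$, $b\geq 0$, $a\leq 0$, and $p=\tfrac{4}{n-2}>0$). Case (i): $a=0$ and $\mu=b=0$. Then $v$ is a positive harmonic function on a complete manifold of nonnegative Ricci curvature, and Yau's classical Liouville theorem forces $v$ to be constant. Case (ii): $a=0$ and $\mu+b\neq 0$. Then Corollary \ref{cor1} (2) says no positive solution exists, so the conclusion holds vacuously. Case (iii): $a<0$ and $\mu=b=0$. Since $p>0$, Corollary \ref{cor1} (3) again gives nonexistence. Case (iv): $a<0$ and $\mu+b\neq 0$. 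Then the ``moreover'' clause of Corollary \ref{cor1} applies (with $\mu\geq 0$, $a<0$, $b\geq 0$, $q\geq 1$, $p>0$) and forces any positive solution to be constant. Combining the four cases gives the statement.

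There is really no analytic obstacle at this stage: once Theorem \ref{thm1} and Corollary \ref{cor1} are in hand, Theorem \ref{thm3} is purely a matter of verifying that the Lichnerowicz exponents $p=\tfrac{4}{n-2}$ and $q=\tfrac{4(n-1)}{n-2}$ sit inside the admissible parameter regime of case (1) of Theorem \ref{thm1}, together with a case analysis for the Liouville conclusion. The only item that could be called a ``hard point'' is the simple sign calculation $\tfrac{2}{n-1}-\tfrac{4}{n-2}<0$, which drives the choice $a\leq 0$ in the hypothesis; this is precisely why case (1) rather than case (2) of Theorem \ref{thm1} must be invoked, since $p=\tfrac{4}{n-2}>\tfrac{4}{n-1}$ places the Einstein-scalar field exponent outside the range covered by case (2).
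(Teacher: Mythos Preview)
Your proof is correct and follows the same route as the paper, which simply records that Theorem \ref{thm3} is an immediate corollary of Theorem \ref{thm1} (case (1)) and Corollary \ref{cor1} upon substituting $p=\tfrac{4}{n-2}$ and $q=\tfrac{4(n-1)}{n-2}$. Your explicit sign check $\tfrac{2}{n-1}-\tfrac{4}{n-2}<0$ and the four-case Liouville analysis supply details the paper leaves implicit; note that the Liouville conclusion also follows more directly by letting $R\to\infty$ in the gradient estimate itself (as in the first paragraph of the proof of Corollary \ref{cor1}), making the case split optional.
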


In particular, in the case the Einstein-scalar field Lichnerowicz-type equation with $a=0$ or $b=\mu=0$ we can deduce the following conclusion:
\begin{thm}\label{thm4}
{\bf(Einstein-scalar field Lichnerowicz-type equation)}
Let $(M,\,g)$ be a n-dimensional noncompact complete Riemannian manifold with a nonnegative Ricci curvature. The Einstein-scalar field Lichnerowicz-type equation does not admit any positive solution if one of the following conditions is satisfied:
\begin{enumerate}
\item $a=0$, $\mu\geq0$, $b\geq0$, and $\mu+b\neq0$;
\item $a<0$ and $\mu=b=0$.
\end{enumerate}
\end{thm}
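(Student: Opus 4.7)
The plan is to show that Theorem \ref{thm4} follows immediately from Corollary \ref{cor1} after checking that the special exponents $p=\frac{4}{n-2}$ and $q=\frac{4(n-1)}{n-2}$ coming from the Einstein-scalar field Lichnerowicz equation verify the corresponding structural hypotheses. Since $n\geq 3$, we have $p=\frac{4}{n-2}>0$ and $q=\frac{4(n-1)}{n-2}\geq 4>1$, so the baseline assumptions $p\geq -1$ and $q\geq 1$ required throughout Section~1 are automatic. Thus the whole argument reduces to matching the sign conditions on $a$, $b$ and $\mu$ in the two cases of Theorem \ref{thm4} with the appropriate cases of Corollary \ref{cor1}.

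For case (1), I would simply observe that the hypotheses $a=0$, $\mu\geq 0$, $b\geq 0$, $\mu+b\neq 0$ together with $p=\frac{4}{n-2}\geq -1$ are exactly the hypotheses of case (2) of Corollary \ref{cor1}. Applying that corollary on the noncompact complete manifold $(M,g)$ with nonnegative Ricci curvature rules out any positive solution. For case (2), I would note that the assumptions $a<0$, $\mu=b=0$, together with $p=\frac{4}{n-2}>0$, place us precisely in case (3) of Corollary \ref{cor1}, which again gives the desired nonexistence statement. The only thing to emphasize explicitly is that the value of $q=\frac{4(n-1)}{n-2}$, although it enters the equation through the $bv^{-q+1}$ term, is irrelevant in case (2) because $b=0$, and in case (1) one only needs $q\geq 1$, which holds.

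Since Corollary \ref{cor1} is itself derived in the paper as a direct consequence of the Cheng–Yau type gradient bound in Theorem \ref{thm1} (letting $R\to\infty$ and using $\kappa=0$ to force $|\nabla v|/v\equiv 0$, then ruling out positive constants by plugging back into \eqref{eqno1.1}), there is no genuine analytical obstacle here. The only point to verify carefully is the compatibility of the exponent ranges: in particular, although $p=\frac{4}{n-2}>\frac{4}{n-1}$ (so that condition (2) in Theorem \ref{thm1} fails), condition (1) in Theorem \ref{thm1}, namely $a(\frac{2}{n-1}-p)\geq 0$, is satisfied whenever $a\leq 0$, which covers both $a=0$ and $a<0$. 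This is precisely what makes cases (1) and (2) of Theorem \ref{thm4} fall within the scope of Theorem \ref{thm1}, and hence of Corollary \ref{cor1}.

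Thus the proof is little more than a bookkeeping exercise. The main step I expect to write out is a short paragraph quoting the relevant cases of Corollary \ref{cor1} and noting that the fixed exponents $p=\frac{4}{n-2}$, $q=\frac{4(n-1)}{n-2}$ satisfy the required inequalities, after which nonexistence is immediate from the already-established gradient estimate combined with the observation that any positive constant solution would force $\mu v+bv^{-q+1}=0$ in case (1) (contradicting $\mu+b\neq 0$ and $\mu,b\geq 0$) and $av^{p+1}=0$ in case (2) (contradicting $a<0$).
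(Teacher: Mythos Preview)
Your proposal is correct and follows exactly the paper's own route: the paper disposes of Theorem~\ref{thm4} (together with Theorem~\ref{thm3}) in one sentence at the end of Section~4 by specializing to $p=\frac{4}{n-2}$ and $q=\frac{4(n-1)}{n-2}$ and invoking the earlier results. Your write-up is in fact more explicit than the paper's, since you identify precisely which cases of Corollary~\ref{cor1} are being used and verify that $a\leq 0$ together with $p=\frac{4}{n-2}>\frac{2}{n-1}$ makes condition~(1) of Theorem~\ref{thm1} hold; this is all implicit in the paper's terse ``their proof follows immediately.''
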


\begin{rem}In comparison with the related conclusions stated in \cite{P. Huang & Y. Wang2022}, our local gradient estimates for solutions to the $n$-dimensional Einstein-scalar field Lichnerowicz equation ($n\geq3$) with $\mu\geq0$, $a\leq0$ and $b\geq0$
are of a similar form with Cheng-Yau's gradient estimate for positive harmonic functions on Riemannian manifolds (cf.\cite{SY}). An important feature of this type estimate is that it depends only on $n$, $\kappa$ and $R$. Therefore, one does not need to take account of any other parameters which need to be calculated separately.
\end{rem}

As a direct consequence, for some special situations we also obtain some local gradient estimates on the positive solutions to the Einstein-scalar field Lichnerowicz equation arose in relativity and some Liouville properties of this equation under some geometric conditions (see Section 5).

The rest of this paper is  organized as follows. In Section 2, we will give a detailed estimate of Laplacian of $$|\nabla\ln v|^2=\dfrac{|\nabla v|^2}{v^2},$$ where $v$ is a positive solution of (\ref{eqno1.1}) with $\mu\geq0$ and $b\geq0$. Then we need to recall the Saloff-Coste's Sobolev embedding theorem. In Section 3, we use the Moser iteration to prove Theorem \ref{thm1}, then we give the proof of Corollary \ref{cor1}. In Section 4, we discuss (\ref{eqno1.1}) with $\mu<0$ and $b\geq0$ under some suitable analysis constraints by almost the same method as in Section 3,  and infer a similar conclusion for the $n$-dimensional Einstein-scalar field Lichnerowicz-type equation. In Section 5, we recall some backgrounds on Lichnerowicz equaiton in relativity, then we give some applications and comments of Theorem \ref{thm3}.

\section{Prelimanary}
Now, let $v$ be a positive smooth solution to the elliptic equation:
\begin{equation}\label{eqno2.1}
\Delta v + \mu v + a v^{p+1} +b v^{-q+1} =0\quad\mbox{on}~ B_R,
\end{equation}
where $\mu$, $a$, $b$, $p\geq-1$ and $q\geq1$ are some real constants. Set $u=-\ln v$. We compute directly to obtain
\begin{equation}\label{eqno2.2}
\Delta u=|\nabla u|^2+\mu+ae^{-pu}+be^{qu}.
\end{equation}
For convenience, we denote $f=|\nabla u|^2$. By a direct calculation we can verify
\begin{equation}
\label{eqno2.3}
\Delta u=f+\mu+ae^{-pu}+be^{qu}.
\end{equation}

Firstly, we need to establish the following lemmas.

\begin{lem}\label{lem2.1}
Let $v$ be a positive smooth solution to \eqref{eqno2.1}, $u=-\ln v$ and $f=|\nabla u|^2$. For any real numbers $\mu,\,a,\,b$, $q\geq1$ and $p\geq-1$ which are associated with \eqref{eqno2.1}, then, at the point where $f\neq0$,  for any given $\iota\geq1$ we have:
\begin{equation}\label{eqno2.4}
\begin{aligned}
\frac{\Delta \left(f^\iota\right)}{\iota f^{\iota-1}}
\geq&-2(n-1)\kappa f+\frac{2f^2}{n-1}+\frac{2(n-2)\langle\nabla u,\,\nabla f\rangle}{n-1}\\
&+\frac{2(2\iota-1)}{2(\iota-1)(n-1)+n}\left(\mu+ae^{-pu}+be^{qu}\right)^2\\
&+4f\frac{\mu+ae^{-pu}+be^{qu}}{n-1}+2f \left( bqe^{qu}-ape^{-pu} \right).
\end{aligned}
\end{equation}
\end{lem}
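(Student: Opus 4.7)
The plan is to apply the Bochner formula to $u=-\ln v$, combine it with a refined (Kato-type) Hessian inequality at points where $f:=|\nabla u|^2\neq 0$, substitute the equation $\Delta u=f+E$ with $E:=\mu+ae^{-pu}+be^{qu}$ from \eqref{eqno2.3}, and extract the precise coefficient $\frac{2(2\iota-1)}{2(\iota-1)(n-1)+n}$ by minimizing a one-variable quadratic in $\langle\nabla u,\nabla f\rangle/f$.

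A direct computation gives $\frac{\Delta f^\iota}{\iota f^{\iota-1}}=\Delta f+(\iota-1)\frac{|\nabla f|^2}{f}$. Applying the Bochner formula and $Ric\geq-(n-1)\kappa$ yields $\Delta f\geq 2|\nabla^2 u|^2+2\langle\nabla u,\nabla\Delta u\rangle-2(n-1)\kappa f$; differentiating $\Delta u=f+E$ produces $\langle\nabla u,\nabla\Delta u\rangle=\langle\nabla u,\nabla f\rangle+f(bqe^{qu}-pae^{-pu})$, which already accounts for the $-2(n-1)\kappa f$ and $2f(bqe^{qu}-pae^{-pu})$ terms in the statement. For the Hessian, I would choose an orthonormal frame with $e_1=\nabla u/\sqrt{f}$ at a point where $f\neq 0$; then $u_{11}=\frac{\langle\nabla u,\nabla f\rangle}{2f}$ and $\sum_k u_{1k}^2=\frac{|\nabla f|^2}{4f}$, so the refined inequality
$$|\nabla^2 u|^2\geq u_{11}^2+2\sum_{k>1}u_{1k}^2+\frac{(\Delta u-u_{11})^2}{n-1}=\frac{|\nabla f|^2}{2f}-u_{11}^2+\frac{(\Delta u-u_{11})^2}{n-1}$$
holds, where I have used $\sum_{i,k\geq 2}u_{ik}^2\geq (\Delta u-u_{11})^2/(n-1)$.

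Substituting this back, combining the two $|\nabla f|^2/f$ contributions into $\iota|\nabla f|^2/f$, and expanding $(\Delta u-u_{11})^2$ via $\Delta u=f+E$ produces the $\frac{2f^2}{n-1}$ and $\frac{4fE}{n-1}$ terms outright, while the $-\frac{2\langle\nabla u,\nabla f\rangle}{n-1}$ that appears upon expansion combines with the $2\langle\nabla u,\nabla f\rangle$ from $\langle\nabla u,\nabla\Delta u\rangle$ to give exactly $\frac{2(n-2)}{n-1}\langle\nabla u,\nabla f\rangle$. The main technical obstacle, and the only nonroutine step, is showing the residual
$$\iota\frac{|\nabla f|^2}{f}-2u_{11}^2+\frac{2E^2}{n-1}-\frac{2Ew}{(n-1)f}-\frac{(n-2)w^2}{2(n-1)f^2},\qquad w:=\langle\nabla u,\nabla f\rangle,$$
is bounded below by $\frac{2(2\iota-1)E^2}{2(\iota-1)(n-1)+n}$. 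My plan is to discard the leftover $|\nabla f|^2$ via Cauchy--Schwarz ($|\nabla f|^2\geq w^2/f$), reducing the residual to a quadratic in $t:=w/f$ of the form $At^2-2Bt+C$ with $A=\frac{2(\iota-1)(n-1)+n}{2(n-1)}$, $B=\frac{E}{n-1}$, $C=\frac{2E^2}{n-1}$; its minimum $C-B^2/A$ evaluates to precisely $\frac{2(2\iota-1)E^2}{2(\iota-1)(n-1)+n}$ thanks to the algebraic identity $2(\iota-1)(n-1)+n-1=(n-1)(2\iota-1)$, closing the proof.
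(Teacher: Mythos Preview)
Your argument is correct and follows essentially the same route as the paper: both apply Bochner, use the frame-adapted bound $|\nabla^2 u|^2\ge u_{11}^2+\frac{(\Delta u-u_{11})^2}{n-1}$ together with $|\nabla f|^2\ge 4f\,u_{11}^2$ (your Cauchy--Schwarz step, since $w=2fu_{11}$), and then complete the square in $u_{11}$ (equivalently in your variable $t=w/f=2u_{11}$) to extract the coefficient $\frac{2(2\iota-1)}{2(\iota-1)(n-1)+n}$. One minor bookkeeping slip: your displayed residual double-counts the $u_{11}^2$ contribution---the term $+\frac{2u_{11}^2}{n-1}$ coming from the expansion of $(\Delta u-u_{11})^2$ combines with $-2u_{11}^2$ to give exactly $-\frac{(n-2)w^2}{2(n-1)f^2}$, so the extra $-2u_{11}^2$ in your residual should be deleted; your stated values of $A,B,C$ are already the correct ones for this corrected residual, and the minimization then goes through as you wrote.
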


\begin{proof}
For any $q\geq1$ and $p\geq-1$, by Bochner formula we have
\begin{equation}\label{eqno2.5}
\begin{aligned}
\Delta f
= & 2|\nabla ^2 u|^2 + 2 Ric(\nabla u,\,\nabla u)+ 2\langle \nabla u,\,\nabla \Delta u\rangle\\
\geq& 2|\nabla ^2 u|^2 -2(n-1)\kappa f+ 2\langle \nabla u,\,\nabla f\rangle + 2f \left( bqe^{qu}-ape^{-pu} \right).
\end{aligned}
\end{equation}

Now, we choose a suitable local orthonormal frame $\left\{\xi_i \right\}_{i=1}^{n}$ such that $\nabla u=|\nabla u|\xi_1$. If
we denote $\nabla u=\sum_{i=1}^{n}u_i\xi_i$, it is easy to see
$$
u_1=|\nabla u|\quad\mbox{and}\quad u_i=0
$$
for any $2 \leq i \leq n$. Noticing that
$$
\sum_{i=1}^{n} u_{ii}=f+\mu+ae^{-pu}+be^{qu},
$$
we have
$$
\begin{aligned}
|\nabla ^2 u|^2
\geq& u_{11}^{2}+\sum_{i=2}^{n} u_{ii}^2\\
\geq& u_{11}^{2}+\frac{1}{n-1} \left( f+\mu+ae^{-pu}+be^{qu}-u_{11}\right)^2\\
=&\frac{f^2}{n-1}-\frac{2fu_{11}}{n-1}+\frac{n}{n-1}u_{11}^{2}+\frac{\left(\mu+ae^{-pu}+be^{qu}\right)^2}{n-1}\\
&-2u_{11}\frac{\mu+ae^{-pu}+be^{qu}}{n-1}+2f\frac{\mu+ae^{-pu}+be^{qu}}{n-1}.
\end{aligned}
$$
Since
$$
\begin{aligned}
2f u_{11}
=&2|\nabla u|^2\nabla^2u(\xi_1,\,\xi_1)\\
=&2|\nabla u|^2\langle\nabla_{\xi_1}du, \xi_1\rangle\\
=&2|\nabla u|^2\left[\xi_1\left(|\nabla u|\right)-\left(\nabla_{\xi_1}\xi_1\right)u\right]\\
=&2|\nabla u|^2\langle \xi_1,\,\nabla|\nabla u|\rangle\\
=&\langle\nabla u,\,\nabla f\rangle,
\end{aligned}
$$
we have
$$
\begin{aligned}
|\nabla ^2 u|^2
\geq&\frac{f^2}{n-1}-\frac{\langle\nabla u,\,\nabla f\rangle}{n-1}+\frac{n}{n-1}u_{11}^{2}+\frac{\left(\mu+ae^{-pu}+be^{qu}\right)^2}{n-1}\\
& -2u_{11}\frac{\mu+ae^{-pu}+be^{qu}}{n-1}+2f\frac{\mu+ae^{-pu}+be^{qu}}{n-1}.
\end{aligned}
$$
Hence, by substituting the above inequality into (\ref{eqno2.5}) we get
$$
\begin{aligned}
\Delta f
\geq & \frac{2f^2}{n-1}-\frac{2\langle\nabla u,\,\nabla f\rangle}{n-1}+\frac{2n}{n-1}u_{11}^{2}+\frac{2\left(\mu+ae^{-pu}+be^{qu}\right)^2}{n-1} -4u_{11}\frac{\mu+ae^{-pu}+be^{qu}}{n-1}\\
&+4f\frac{\mu+ae^{-pu}+be^{qu}}{n-1}-2(n-1)\kappa f+ 2\langle \nabla u,\,\nabla f\rangle + 2f \left( bqe^{qu}-ape^{-pu} \right)\\
=&-2(n-1)\kappa f+\frac{2f^2}{n-1}+\frac{2(n-2)\langle\nabla u,\,\nabla f\rangle}{n-1}+\frac{2n}{n-1}u_{11}^{2}+\frac{2\left(\mu+ae^{-pu}+be^{qu}\right)^2}{n-1} \\
&-4u_{11}\frac{\mu+ae^{-pu}+be^{qu}}{n-1}+4f\frac{\mu+ae^{-pu}+be^{qu}}{n-1} + 2f \left( bqe^{qu}-ape^{-pu} \right).
\end{aligned}
$$
For any $\iota\geq1$, we have
$$
\Delta \left(f^\iota\right)=\iota\left(\iota-1\right)f^{\iota-2}|\nabla f|^2+\iota f^{\iota-1}\Delta f,
$$
therefore,
\begin{equation}\label{eqno2.6}
\frac{\Delta \left(f^\iota\right)}{\iota f^{\iota-1}}=(\iota-1)f^{-1}|\nabla f|^2+\Delta f.
\end{equation}
Since
$$
|\nabla f|^2=\sum_{i=1}^{n}|2u_1u_{1i}|^2=4f\sum_{i=1}^{n}u_{1i}^2\geq4fu_{11}^2,
$$
it is not difficult to see that
\begin{equation}\label{eqno2.7}
\begin{aligned}
\frac{\Delta \left(f^\iota\right)}{\iota f^{\iota-1}}
\geq&-2(n-1)\kappa f+\frac{2f^2}{n-1}+\frac{2(n-2)\langle\nabla u,\,\nabla f\rangle}{n-1}+\left[4(\iota-1)+\frac{2n}{n-1}\right]u_{11}^{2}\\
&+\frac{2\left(\mu+ae^{-pu}+be^{qu}\right)^2}{n-1}-4u_{11}\frac{\mu+ae^{-pu}+be^{qu}}{n-1}\\
&+4f\frac{\mu+ae^{-pu}+be^{qu}}{n-1}+2f \left( bqe^{qu}-ape^{-pu} \right).
\end{aligned}
\end{equation}
Since
$$
\begin{aligned}
&\left[4(\iota-1)+\frac{2n}{n-1}\right]u_{11}^{2}-4u_{11}\frac{\mu+ae^{-pu}+be^{qu}}{n-1}\\
=&\frac{4(\iota-1)(n-1)+2n}{n-1}\left[u_{11}^{2}-4u_{11}\frac{\mu+ae^{-pu}+be^{qu}}{n-1}\frac{n-1}{4(\iota-1)(n-1)+2n}\right]\\
=&\frac{4(\iota-1)(n-1)+2n}{n-1}\left[u_{11}-\frac{\mu+ae^{-pu}+be^{qu}}{2(\iota-1)(n-1)+n}\right]^2\\
&-\frac{4(\iota-1)(n-1)+2n}{n-1}\left[\frac{\mu+ae^{-pu}+be^{qu}}{2(\iota-1)(n-1)+n}\right]^2\\
\geq&-\frac{4(\iota-1)(n-1)+2n}{n-1}\left[\frac{\mu+ae^{-pu}+be^{qu}}{2(\iota-1)(n-1)+n}\right]^2\\
=&-\frac{1}{2(\iota-1)(n-1)+n}\cdot\frac{2}{n-1}\left(\mu+ae^{-pu}+be^{qu}\right)^2,
\end{aligned}
$$
and
$$
\begin{aligned}
\frac{2}{n-1}-\frac{1}{2(\iota-1)(n-1)+n }\cdot\frac{2}{n-1}
=&\frac{2}{n-1}\frac{2(\iota-1)(n-1)+n -1}{2(\iota-1)(n-1)+n }\\
=&\frac{2(2\iota-1)}{2(\iota-1)(n-1)+n},
\end{aligned}
$$
then,
$$
\begin{aligned}
\frac{\Delta \left(f^\iota\right)}{\iota f^{\iota-1}}
\geq&-2(n-1)\kappa f+\frac{2f^2}{n-1}+\frac{2(n-2)\langle\nabla u,\,\nabla f\rangle}{n-1}\\
&+\frac{2(2\iota-1)}{2(\iota-1)(n-1)+n}\left(\mu+ae^{-pu}+be^{qu}\right)^2\\
&+4f\frac{\mu+ae^{-pu}+be^{qu}}{n-1}+2f \left( bqe^{qu}-ape^{-pu} \right),
\end{aligned}
$$
which finishes the proof of this lemma.
\end{proof}

\begin{lem}\label{lem2.2}
Let $v$ be a positive solution to \eqref{eqno2.1} and $f$ be the same as in Lemma \ref{lem2.1}. Suppose that $\mu\geq0$ and $b\geq0$ in \eqref{eqno2.1}. Then, for any $q\geq1$ and given $\iota\geq1$, at the point where $f\neq0$ we have:
\begin{enumerate}
\item  if $a\left(\frac{2}{n-1}-p\right)\geq0$ and $p\geq -1$, there holds
\begin{equation}\label{eqno2.8}
\frac{\Delta \left(f^\iota\right)}{\iota f^{\iota-1}}\geq-2(n-1)\kappa f+ \frac {2(n-2)}{n-1}\langle \nabla u,\nabla f\rangle\\+\frac{2}{n-1}f^2;
\end{equation}
\item if $p\geq0$, there holds
\begin{equation}\label{eqno2.9}
\frac{\Delta \left(f^\iota\right)}{\iota f^{\iota-1}}\geq-2(n-1)\kappa f+ \frac {2(n-2)}{n-1}\langle \nabla u,\nabla f\rangle\\+\rho(n,\,p,\,\iota)f^2.
\end{equation}
where
$\rho(n,\,p,\,\iota)$ is a constant which depends on $n$, $p$ and $\iota$.
\end{enumerate}
\end{lem}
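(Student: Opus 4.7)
The plan is to start from the pointwise inequality (\ref{eqno2.4}) of Lemma \ref{lem2.1} and show that, after subtracting the three ``common'' terms $-2(n-1)\kappa f + \tfrac{2(n-2)}{n-1}\langle\nabla u,\nabla f\rangle + \tfrac{2}{n-1}f^2$ that appear on both sides of the desired conclusions, the remaining quantity
\[
T := \alpha\bigl(\mu + ae^{-pu} + be^{qu}\bigr)^2 + \frac{4f\bigl(\mu + ae^{-pu} + be^{qu}\bigr)}{n-1} + 2f\bigl(bqe^{qu} - ape^{-pu}\bigr)
\]
(with $\alpha := \tfrac{2(2\iota-1)}{2(\iota-1)(n-1)+n} > 0$) is non-negative in case (1) and at worst of the form $-C(n,p,\iota)f^2$ in case (2).

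For part (1), I would expand $\tfrac{4fA}{n-1}$ (with $A = \mu + ae^{-pu} + be^{qu}$) and collect terms in the exponentials, obtaining
\[
T = \alpha A^2 + \frac{4\mu f}{n-1} + 2afe^{-pu}\!\left(\tfrac{2}{n-1} - p\right) + 2bfe^{qu}\!\left(\tfrac{2}{n-1} + q\right).
\]
Under the hypotheses $\mu\geq 0$, $b\geq 0$, $q\geq 1$, together with the assumption $a(\tfrac{2}{n-1}-p)\geq 0$, each of the four summands is manifestly non-negative, hence $T\geq 0$ and (\ref{eqno2.8}) follows.

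The main obstacle is part (2): the sign of $a$ is unrestricted, so $a(\tfrac{2}{n-1}-p)$ may be negative and the pointwise sign argument of part (1) breaks. A naive Young's inequality applied to $2af(\tfrac{2}{n-1}-p)e^{-pu}$ against $\alpha a^2 e^{-2pu}$ will only succeed at the price of an uncontrolled residue of the form $-C(\mu+be^{qu})^2$, which cannot be absorbed later. The trick I would use instead is to split $A = W + aX$ with $W := \mu + be^{qu}\geq 0$ and $X := e^{-pu}$, and regroup
\[
T = \Bigl[\alpha W^2 + \tfrac{4fW}{n-1} + 2bqfe^{qu}\Bigr] + \Bigl[\alpha a^2 X^2 + aX\bigl(2\alpha W + 2f(\tfrac{2}{n-1}-p)\bigr)\Bigr].
\]
The first bracket is non-negative, while the second is a quadratic in $aX$ with positive leading coefficient $\alpha$, hence bounded below by $-\tfrac{1}{4\alpha}\bigl(2\alpha W + 2f(\tfrac{2}{n-1}-p)\bigr)^2$.

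Expanding that lower bound, the $\alpha W^2$ contribution cancels exactly the one in the first bracket, and the cross term $-2Wf(\tfrac{2}{n-1}-p)$ merges with $\tfrac{4fW}{n-1}$ into $2pWf$, which is non-negative \emph{precisely} because $p\geq 0$. What remains is
\[
T \geq 2pWf + 2bqfe^{qu} - \frac{(\tfrac{2}{n-1}-p)^2}{\alpha}\,f^2 \geq -\frac{(\tfrac{2}{n-1}-p)^2}{\alpha}\,f^2,
\]
and adding the previously subtracted $\tfrac{2}{n-1}f^2$ yields (\ref{eqno2.9}) with the explicit constant $\rho(n,p,\iota) = \tfrac{2}{n-1} - \tfrac{(2/(n-1)-p)^2}{\alpha}$. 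The delicate point, which is worth emphasizing, is the choice of the splitting $A = W + aX$: it is tuned so that the square-completion residue lives entirely in the $W$-sector, where it is absorbed free of charge, rather than leaking into an uncontrollable $(\mu+be^{qu})^2$ term.
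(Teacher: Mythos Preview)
Your proposal is correct and yields exactly the same constant $\rho(n,p,\iota)=\tfrac{2}{n-1}-\tfrac{1}{\alpha}\bigl(\tfrac{2}{n-1}-p\bigr)^2$ as the paper. Part (1) is identical to the paper's argument.

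For part (2) the algebra is organized differently. The paper first uses $q\geq 1\geq -p$ to weaken $2f(bqe^{qu}-ape^{-pu})$ to $-2pf(ae^{-pu}+be^{qu})$, then adds and subtracts $2p\mu f$ so that the entire expression becomes $\alpha A^2+2fA\bigl(\tfrac{2}{n-1}-p\bigr)+2p\mu f$ with the full $A=\mu+ae^{-pu}+be^{qu}$; after dropping the non-negative $2p\mu f$ (here is where $p\geq 0$ enters), the square is completed in $A$. You instead keep the exponential terms separate, split $A=W+aX$, and complete the square in $aX$ alone; the role of $p\geq 0$ then appears in the residual term $2pWf$. Your route is marginally more direct in that it avoids the preliminary inequality $bqe^{qu}\geq -bpe^{qu}$, but both arguments are the same square-completion idea executed with different bookkeeping, and they land on the identical $\rho$.
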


\begin{proof}
For any $q \geq 1$, we need to deal with the following two cases respectively:
 \medskip

 (1). By assumptions $\mu\geq0$ and $b\geq0$, Lemma \ref{lem2.1} tells us that at the point where $f\neq0$ there holds
$$
\frac{\Delta \left(f^\iota\right)}{\iota f^{\iota-1}}
\geq-2(n-1)\kappa f+\frac{2f^2}{n-1}+\frac{2(n-2)\langle\nabla u,\,\nabla f\rangle}{n-1}+2ae^{-pu}f \left(\frac{2}{n-1}-p\right).
$$
Obviously, if $$a\left(\frac{2}{n-1}-p\right)\geq0,\, p\geq-1$$
 we can deduce that (\ref{eqno2.8}) holds true.

(2). On the other hand, since $q\geq1\geq-p$, it follows from (\ref{eqno2.4}) that
$$
\begin{aligned}
\frac{\Delta \left(f^\iota\right)}{\iota f^{\iota-1}}
\geq&-2(n-1)\kappa f+\frac{2f^2}{n-1}+\frac{2(n-2)\langle\nabla u,\,\nabla f\rangle}{n-1}\\
&+\frac{2(2\iota-1)}{2(\iota-1)(n-1)+n}\left(\mu+ae^{-pu}+be^{qu}\right)^2\\
&+4f\frac{\mu+ae^{-pu}+be^{qu}}{n-1}-2pf \left(ae^{-pu}+ be^{qu} \right)\\
=&-2(n-1)\kappa f+\frac{2f^2}{n-1}+\frac{2(n-2)\langle\nabla u,\,\nabla f\rangle}{n-1}\\
&+\frac{2(2\iota-1)}{2(\iota-1)(n-1)+n}\left(\mu+ae^{-pu}+be^{qu}\right)^2\\
&+4f\frac{\mu+ae^{-pu}+be^{qu}}{n-1}-2pf \left(\mu+ae^{-pu}+ be^{qu} \right)+2pf\mu.
\end{aligned}
$$
Therefore, if $p\geq0$, we obtain
$$
\begin{aligned}
\frac{\Delta \left(f^\iota\right)}{\iota f^{\iota-1}}
\geq&-2(n-1)\kappa f+\frac{2f^2}{n-1}+\frac{2(n-2)\langle\nabla u,\,\nabla f\rangle}{n-1}\\
&+\frac{2(2\iota-1)}{2(\iota-1)(n-1)+n}\left(\mu+ae^{-pu}+be^{qu}\right)^2\\
&+2f\left(\mu+ae^{-pu}+be^{qu}\right)\left(\frac{2}{n-1}-p\right).
\end{aligned}
$$
Since
$$
\begin{aligned}
&\frac{2(2\iota-1)}{2(\iota-1)(n-1)+n}\left(\mu+ae^{-pu}+be^{qu}\right)^2+2f \left(\mu+ae^{-pu}+be^{qu}\right)\left(\frac{2}{n-1}-p\right)\\
=&\frac{2(2\iota-1)}{2(\iota-1)(n-1)+n}\left[\left(\mu+ae^{-pu}+be^{qu}\right)^2+2 f\left(\mu+ae^{-pu}+be^{qu}\right)\left(\frac{2}{n-1}-p\right)\right.\\
&\left.\times\frac{2(\iota-1)(n-1)+n}{2(2\iota-1)}\right]\\
=&\frac{2(2\iota-1)}{2(\iota-1)(n-1)+n}\left[\left(\mu+ae^{-pu}+be^{qu}\right)+f\left(\frac{2}{n-1}-p\right)\frac{2(\iota-1)(n-1)+n}{2(2\iota-1)}\right]^2\\
&-\frac{2(2\iota-1)}{2(\iota-1)(n-1)+n}\left[f\left(\frac{2}{n-1}-p\right)\frac{2(\iota-1)(n-1)+n}{2(2\iota-1)}\right]^2\\
\geq&-\frac{2(\iota-1)(n-1)+n}{2(2\iota-1)}\left(\frac{2}{n-1}-p\right)^2f^2,
\end{aligned}
$$
then,
$$
\begin{aligned}
\frac{\Delta \left(f^\iota\right)}{\iota f^{\iota-1}}
\geq&-2(n-1)\kappa f+\frac{2f^2}{n-1}+\frac{2(n-2)\langle\nabla u,\,\nabla f\rangle}{n-1}\\
&-\frac{2(\iota-1)(n-1)+n}{2(2\iota-1)}\left(\frac{2}{n-1}-p\right)^2f^2.
\end{aligned}
$$
Therefore, by rearranging the above we arrive at
$$
\begin{aligned}
\frac{\Delta \left(f^\iota\right)}{\iota f^{\iota-1}}
\geq&-2(n-1)\kappa f+\frac{2(n-2)\langle\nabla u,\,\nabla f\rangle}{n-1}\\
&+\left[\frac{2}{n-1}-\frac{2(\iota-1)(n-1)+n}{2(2\iota-1)}\left(\frac{2}{n-1}-p\right)^2\right]f^2.
\end{aligned}
$$
Now we denote that
$$
\rho (n,\,p,\,\iota)=\frac{2}{n-1}-\frac{2(\iota-1)(n-1)+n}{2(2\iota-1)}\left(\frac{2}{n-1}-p\right)^2,
$$
where $p\geq0$. It is easy to see that (\ref{eqno2.9}) has been established. Thus, the proof of Lemma \ref{lem2.2} is completed.\end{proof}

\begin{lem}\label{lem2.3}
Let $v$ is a positive solution to \eqref{eqno2.1} and $f$ be the same as in Lemma \ref{lem2.1}. Suppose that $\mu\geq0$, $b\geq0$ and $q\geq1$ in equation \eqref{eqno2.1}. If one of the following conditions holds true:
\begin{enumerate}
\item  $a\left(\frac{2}{n-1}-p\right)\geq0$ and  $p\geq -1$;
\item  $0< p< \frac{4}{n-1}$,
\end{enumerate}
then, there exist some constant $\iota>1$ large enough such that at the point where $f\neq0$ there holds true
\begin{equation}\label{eqno2.10}
\frac{\Delta \left(f^\iota\right)}{\iota f^{\iota-1}}\geq-2(n-1)\kappa f+ \frac {2(n-2)}{n-1}\langle \nabla u,\nabla f\rangle\\+\tilde\rho(n,\,p)f^2,
\end{equation}
where $\tilde\rho(n,\,p)\in\left(0,\,\frac{2}{n-1}\right]$ depends only on $n$ when $a\left(\frac{2}{n-1}-p\right)\geq0$, and depends on both $n$ and $p$ when $0< p< \frac{4}{n-1}$.
\end{lem}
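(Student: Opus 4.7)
The plan is to deduce this lemma directly from Lemma~\ref{lem2.2} by choosing the free parameter $\iota$ appropriately in each of the two cases.

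For case (1), where $a\bigl(\frac{2}{n-1} - p\bigr) \geq 0$ and $p \geq -1$, the conclusion is exactly inequality (\ref{eqno2.8}) of Lemma~\ref{lem2.2}, which already holds for every $\iota \geq 1$. I would therefore just set $\tilde\rho(n, p) = \frac{2}{n-1}$, which depends only on $n$ and sits at the right endpoint of the asserted range $\bigl(0, \frac{2}{n-1}\bigr]$.

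For case (2), where $0 < p < \frac{4}{n-1}$, I would invoke part (2) of Lemma~\ref{lem2.2} and show that the coefficient
$$\rho(n, p, \iota) = \frac{2}{n-1} - \frac{2(\iota-1)(n-1) + n}{2(2\iota-1)}\left(\frac{2}{n-1} - p\right)^2$$
can be made strictly positive by taking $\iota$ large. A brief differentiation gives
$$\frac{d}{d\iota}\left[\frac{2(\iota-1)(n-1) + n}{2(2\iota-1)}\right] = -\frac{1}{(2\iota-1)^2} < 0,$$
so this bracketed factor is strictly decreasing in $\iota$ with limit $\frac{n-1}{2}$ as $\iota \to \infty$. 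Hence $\rho(n, p, \iota)$ is strictly increasing in $\iota$ and
$$\lim_{\iota \to \infty} \rho(n, p, \iota) = \frac{2}{n-1} - \frac{n-1}{2}\left(\frac{2}{n-1} - p\right)^2.$$
A one-line algebraic check shows that this limit is strictly positive exactly when $\bigl|\frac{2}{n-1} - p\bigr| < \frac{2}{n-1}$, i.e.\ when $0 < p < \frac{4}{n-1}$, which is precisely the hypothesis of case (2). I would therefore fix some $\iota_0$ large enough that $\rho(n, p, \iota_0) > 0$ and set $\tilde\rho(n, p) := \rho(n, p, \iota_0)$; this value lies in $\bigl(0, \frac{2}{n-1}\bigr]$ since the subtracted term in $\rho$ is nonnegative.

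No substantial obstacle is anticipated: the argument reduces to the monotonicity and limit computation above. The only delicate feature is that as $p$ approaches either endpoint $0$ or $\frac{4}{n-1}$ the required $\iota_0$ blows up and $\tilde\rho(n, p)$ degenerates toward zero, which is why $\tilde\rho$ must depend on both $n$ and $p$ in case (2), in contrast to the $p$-independent choice available in case (1).
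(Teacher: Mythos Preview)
Your proposal is correct and follows essentially the same route as the paper: in case~(1) you take $\tilde\rho=\frac{2}{n-1}$ directly from \eqref{eqno2.8}, and in case~(2) you exploit the monotone decrease of $\iota\mapsto\frac{2(\iota-1)(n-1)+n}{2(2\iota-1)}$ to its limit $\frac{n-1}{2}$ so that $\rho(n,p,\iota)$ becomes positive for large $\iota$, exactly as the paper does. Your explicit derivative computation and the remark on the degeneration of $\tilde\rho$ as $p\to 0$ or $p\to\frac{4}{n-1}$ are welcome clarifications but do not alter the underlying argument.
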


\begin{proof} By assumptions we have $\mu\geq0$ and $b\geq0$. For any $q\geq1$ and $\iota\geq1$, we deal with the following two cases respectively:
\medskip

{\bf Case 1}:      $$a\left(\frac{2}{n-1}-p\right)\geq0\quad\text{and}\quad p\geq -1.$$

For this case, we let $$\tilde\rho(n,\,p)=\frac{2}{n-1}.$$
Then, according to (\ref{eqno2.8}) we can infer the required result is true for any $\iota\geq1$.
\medskip

{\bf Case 2}:       $$0< p< \frac{4}{n-1}.$$

In the present situation, we have
$$ 0\leq \left|\frac{2}{n-1}-p\right|<\frac{2}{n-1}\quad\quad\mbox{and}\quad\quad 0\leq\left(\frac{2}{n-1}-p\right)^2<\frac{4}{(n-1)^2}.$$
Noting that
$$
\lim\limits_{\iota\to+\infty}\frac{2(\iota-1)(n-1)+n}{2(2\iota-1)}=\frac{n-1}{2},
$$
and the monotonicity of the following function
$$y(\iota)=\frac{2(\iota-1)(n-1)+n}{2(2\iota-1)}$$
with respect to $\iota$ on $[1,\,\infty)$, we obtain that for any $\iota\geq1$ there holds true
$$
\frac{n-1}{2}\leq\frac{2(\iota-1)(n-1)+n}{2(2\iota-1)}\leq\frac{n}{2}.
$$
Hence, we choose $\iota=\iota_{n,\,p}$ large enough such that
$$
\frac{2(\iota-1)(n-1)+n}{2(2\iota-1)}\left(\frac{2}{n-1}-p\right)^2<\frac{2}{n-1}
$$
and
$$
\rho(n,\,p,\,\iota)=\frac{2}{n-1}-\frac{2(\iota-1)(n-1)+n}{2(2\iota-1)}\left(\frac{2}{n-1}-p\right)^2>0.
$$
Now, we let $\tilde\rho(n,\,p)=\rho(n,\,p,\,\iota_{n,p})$, then the required inequality follows immediately. Thus, the proof of Lemma \ref{lem2.3} is completed.\end{proof}

Next, we need to recall the Saloff-Coste's Sobolev embedding theorem (Theorem 3.1 in \cite{L. Saloff-Coste1992}, which plays a key role on the arguments (Moser iteration) taken here.

\begin{thm}\label{thm Sobolev}
(the Saloff-Coste's Sobolev embedding theorem)
Let $(M,\,g)$ be a complete Riemannian manifold with $Ric\geq -(n-1)\kappa $.
For any $n>2$, there exist a constant $c_n$, depending only on $n$, such that for all $B\subset M$ we have
$$
\left(\int_{B}f^{2\lambda}\right)^{\frac{1}{\lambda}}\leq e^{c_n\left(1+\sqrt \kappa R\right)}V^{-\frac{2}{n}}R^2\left(\int_{B}\left|\nabla f\right|^2+R^{-2}\int_{B}f^2\right),
\quad f\in C_{0}^{\infty}(B),
$$
where $R$ and $V$ are the radius and volume of $B$, constant $\lambda=\frac{n}{n-2}$.
For $n=2$, the above inequality holds with $n$ replaced by any fixed $n'>2$.
\end{thm}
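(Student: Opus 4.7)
The plan is to derive the claimed local Sobolev inequality from two geometric-analytic ingredients that both follow from the Ricci lower bound $Ric \geq -(n-1)\kappa$, namely a volume doubling property and a local Neumann-Poincar\'e inequality. This is the strategy pioneered by Saloff-Coste, and carrying out both ingredients under a Ricci bound (rather than, say, a sectional curvature bound) is exactly what produces the exponential prefactor $e^{c_n(1+\sqrt{\kappa}R)}$ rather than a worse dependence.

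First I would invoke Bishop-Gromov volume comparison: for any ball $B(x,r) \subset M$ and any $0 < s \leq r$, there holds
$$
\frac{V(x,r)}{V(x,s)} \leq \left(\frac{r}{s}\right)^n e^{(n-1)\sqrt{\kappa}\,r},
$$
which in particular yields a local doubling estimate $V(x, 2r) \leq C(n) e^{c(n)\sqrt{\kappa}\,r} V(x, r)$. Next I would invoke Buser's Poincar\'e inequality, which under the same Ricci lower bound asserts
$$
\int_B |f - f_B|^2 \, dv \leq C(n) e^{c(n)(1+\sqrt{\kappa}\,r)} r^2 \int_B |\nabla f|^2 \, dv
$$
for every smooth $f$ on $B = B(x, r)$, where $f_B$ denotes the mean of $f$ over $B$. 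Both Bishop-Gromov and Buser are classical consequences of Ricci comparison with the space form of constant curvature $-\kappa$.

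Finally, I would combine doubling and Poincar\'e via the abstract framework of Saloff-Coste (see also Grigor'yan): in the presence of a doubling measure and an $L^2$-Poincar\'e inequality, one obtains a family of local $L^2$-Sobolev inequalities of exactly the stated form. The standard route iterates the mean-zero Poincar\'e inequality along a Whitney-type covering of $B$ by sub-balls, using doubling to control the constants at each scale; the $C_0^\infty$ boundary condition lets the $f_B$ contributions be absorbed into the $R^{-2}\int_B f^2$ term on the right-hand side. The main technical obstacle is keeping the constants under tight control throughout this iteration, so that the exponential corrections coming from Bishop-Gromov and from Buser collapse into the single prefactor $e^{c_n(1+\sqrt{\kappa}R)}$ with a purely dimensional $c_n$, rather than compounding into a constant that depends on $\kappa$ and $R$ in a less uniform way; the normalization by $V^{-2/n}$ is exactly the factor needed to make the estimate scale-invariant under homothetic rescalings of the model ball.
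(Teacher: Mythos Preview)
Your sketch is the standard Saloff-Coste argument (Bishop--Gromov doubling plus Buser's Poincar\'e, then the abstract doubling-plus-Poincar\'e-implies-Sobolev machinery), and it is correct in outline. However, the paper does not prove this statement at all: it is quoted as a known result, with a direct citation to Theorem~3.1 of Saloff-Coste's 1992 paper \cite{L. Saloff-Coste1992}, and is used as a black box in the Moser iteration that follows. So there is nothing to compare your proposal against---you have simply supplied the proof that the paper deliberately omits.
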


\section{Proof of Theorem \ref{thm1}}
In this section, we focus on the proof of Theorem \ref{thm1} and Corollary \ref{cor1} on the equation (\ref{eqno1.1}) in the case $\mu\geq0$ and $b\geq0$.
Throughout this section, unless otherwise mentioned, $\mu$ and $b$ are two nonnegative constants.

\begin{lem} Let $v$ be a positive solution to \eqref{eqno1.1}, $u=-\ln v$ and $f=|\nabla u|^2$ as before. Assume that one of the following two conditions holds true:
\begin{enumerate}
\item  $a\left(\frac{2}{n-1}-p\right)\geq0$ and $p\geq-1$;
\item  $0< p< \frac{4}{n-1}$.
\end{enumerate}
Then, there exists $\theta_0=c_{n,p}(1+\sqrt{\kappa}R)$, where $c_{n, p}=\max\{c_n,\,2\iota,\,\frac{16}{\tilde\rho(n,\,p)}\}$ is a positive constant depending on $n$, $p$ and $\tilde\rho(n,\,p)$ which is defined as before, such that for any $0\leq\eta\in C_{0}^{\infty}(B_{R})$ and any $\theta\geq\theta_0$ large enough there holds true
\begin{equation*}
\begin{aligned}
&e^{-\theta_0}V^{\frac{2}{n}}\left(\int_{B_{R}}f^{(\theta+1)\lambda}\eta^{2\lambda}\right)^{\frac{1}{\lambda}}+
4\theta\tilde\rho(n,\,p)R^2\int_{B_{R}}f^{\theta+2}\eta^2\\
\leq&\theta_0^2\theta \int_{B_{R}} f^{\theta+1}\eta^2 +66R^2\int_{B_{R}}f^{\theta+1}\left|\nabla\eta\right|^2.
\end{aligned}
\end{equation*}
Here $B_R$ is a geodesic Ball in $(M, g)$ and $V$ is the volume of $B_R$.
\end{lem}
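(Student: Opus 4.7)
The plan is to derive the estimate by feeding the pointwise Bochner-type inequality of Lemma \ref{lem2.3} into an integration-by-parts scheme and then into Saloff-Coste's Sobolev embedding. First I would multiply the inequality
\[
\Delta(f^\iota)\geq \iota f^{\iota-1}\left[-2(n-1)\kappa f+\tfrac{2(n-2)}{n-1}\langle\nabla u,\nabla f\rangle+\tilde\rho(n,p)f^2\right]
\]
by the nonnegative test function $f^{\theta-\iota+1}\eta^2$ and integrate over $B_R$. The requirement $\theta\geq 2\iota$, built into $\theta\geq\theta_0$ via $c_{n,p}\geq 2\iota$, ensures that $\theta-\iota+1\geq 0$; points where $f=0$ contribute zero (or one works with $f+\varepsilon$ and lets $\varepsilon\to 0$). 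Integration by parts on the left-hand side produces
\[
-\iota(\theta-\iota+1)\int_{B_R}f^{\theta-1}|\nabla f|^2\eta^2-2\iota\int_{B_R}f^{\theta}\eta\,\nabla\eta\cdot\nabla f,
\]
so I obtain an inequality whose good terms are this gradient integral together with $\iota\tilde\rho(n,p)\int f^{\theta+2}\eta^2$, and whose bad terms are the $\kappa$ integral, a cutoff cross term, and the drift coupling $\int f^{\theta}\eta^2\langle\nabla u,\nabla f\rangle$.

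Next I would absorb the two bad gradient-type terms via Young's inequality. For the cutoff piece, $|f^{\theta}\eta\,\nabla\eta\cdot\nabla f|\leq \delta_1 f^{\theta-1}|\nabla f|^2\eta^2+\delta_1^{-1}f^{\theta+1}|\nabla\eta|^2$; for the drift piece, using $|\nabla u|=\sqrt f$, $|f^{\theta}\eta^2\langle\nabla u,\nabla f\rangle|\leq \delta_2 f^{\theta-1}|\nabla f|^2\eta^2+\delta_2^{-1}f^{\theta+2}\eta^2$. Choosing $\delta_1,\delta_2$ proportional to $1/\theta$ so that the $\delta^{-1}$ factors contribute a single factor of $\theta$, a portion of order $\theta$ of the $\int f^{\theta-1}|\nabla f|^2\eta^2$ term survives on the left, while only a small fraction (at most half, thanks to $c_{n,p}\geq 16/\tilde\rho$) of the good $\int f^{\theta+2}\eta^2$ is consumed. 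This is exactly the mechanism that produces the clean coefficient $4\theta\tilde\rho$ in the statement.

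Then I would apply Saloff-Coste's Sobolev inequality (Theorem \ref{thm Sobolev}) to the test function $h=f^{(\theta+1)/2}\eta$, which satisfies
\[
|\nabla h|^2\leq \tfrac{(\theta+1)^2}{2}f^{\theta-1}|\nabla f|^2\eta^2+2f^{\theta+1}|\nabla\eta|^2.
\]
The surviving good term from the Moser step, being a multiple of $\int f^{\theta-1}|\nabla f|^2\eta^2$ with coefficient of order $\theta$, therefore controls $\int|\nabla h|^2$ up to the harmless cutoff remainder $\int f^{\theta+1}|\nabla\eta|^2$. Plugging this into Saloff-Coste's inequality converts the gradient integral into the Sobolev term $V^{2/n}\bigl(\int f^{(\theta+1)\lambda}\eta^{2\lambda}\bigr)^{1/\lambda}$ with prefactor $e^{-c_n(1+\sqrt\kappa R)}R^{-2}$; multiplying through by $R^2$ and combining the $\kappa\int f^{\theta+1}\eta^2$ term with the $R^{-2}\int h^2$ lower-order piece from Sobolev into a single factor of $(1+\sqrt\kappa R)^2$ brings one to the claimed form.

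The main obstacle will be the careful bookkeeping of constants. The choice $\theta_0=c_{n,p}(1+\sqrt\kappa R)$ with $c_{n,p}=\max\{c_n,2\iota,16/\tilde\rho(n,p)\}$ is engineered to simultaneously (i) make the test-function exponent $\theta-\iota+1$ nonnegative, (ii) absorb the Saloff-Coste factor $e^{c_n(1+\sqrt\kappa R)}$ into $e^{\theta_0}$, and (iii) force $\theta_0^2\theta$ to dominate the combined contribution of the $(n-1)\kappa\theta$ term and the $\theta R^{-2}$ lower-order term coming from Sobolev, so that the stated coefficients $\theta_0^2\theta$, $4\theta\tilde\rho R^2$, and the universal constant $66 R^2$ all emerge. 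Tracking these constants through the Young's-inequality absorptions and the passage from $\int|\nabla h|^2$ to $\int f^{\theta-1}|\nabla f|^2\eta^2$ is the only delicate step; everything else is the standard Nash-Moser template adapted to the Lichnerowicz nonlinearity.
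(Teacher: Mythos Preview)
Your proposal is correct and follows essentially the same route as the paper. The only cosmetic difference is that the paper first converts Lemma~\ref{lem2.3} back into an inequality for $\Delta f$ via the identity $\Delta(f^\iota)/(\iota f^{\iota-1})=\Delta f+(\iota-1)f^{-1}|\nabla f|^2$ and then tests against $\eta^2 f_\epsilon^\theta$ (with $f_\epsilon=(f-\epsilon)^+$ to localize away from $\{f=0\}$), whereas you test $\Delta(f^\iota)$ directly against $f^{\theta-\iota+1}\eta^2$; after dividing by $\iota$ these two computations produce the identical energy inequality $(\theta-\iota+1)\int f^{\theta-1}|\nabla f|^2\eta^2+\tilde\rho\int f^{\theta+2}\eta^2\leq\cdots$, and from there the Young-inequality absorptions, the passage to $|\nabla(f^{(\theta+1)/2}\eta)|^2$, the application of Saloff--Coste, and the constant bookkeeping using $c_{n,p}=\max\{c_n,2\iota,16/\tilde\rho\}$ are exactly as in the paper.
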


\begin{proof} First, we note the fact $f$ is smooth enough away from $\{x: f(x)=0\}$ according to the standard elliptic regularity theory.
Let
$$ A=\left\{x\in B_R|f(x)=0\right\},\quad \tilde A=B_R\setminus A.$$
Therefore, according to the Lemma \ref{lem2.3}, we take integration by part to derive that,
for any $q\geq 1$ and any function $\varphi\in W_{0}^{1,2}(B_R)$ with $\varphi\geq 0$ and $\mbox{supp}(\varphi)\subset\subset\tilde A$,
there holds true:
\begin{equation}\label{add}
\int_{B_{R}}\frac{\Delta \left(f^\iota\right)}{\iota f^{\iota-1}}\varphi
\geq-2(n-1)\kappa \int_{B_{R}}f\varphi +\frac{2\left(n-2\right)}{n-1}\int_{B_{R}}\langle\nabla u,\,\nabla f\rangle \varphi+\tilde\rho(n,\,p)\int_{B_{R}}f^{2}\varphi,
\end{equation}
where $\iota\geq1$ is a suitable positive constant chosen in the proof of Lemma \ref{lem2.3}.

From (\ref{eqno2.6}) we have
$$
\int_{B_{R}}\Delta f\varphi=\int_{B_{R}}\frac{\Delta \left(f^\iota\right)}{\iota f^{\iota-1}}\varphi-(\iota-1)\int_{B_{R}}f^{-1}|\nabla f|^2\varphi.
$$
Substituting \eqref{add} into the above identity leads to
$$
\begin{aligned}
\int_{B_{R}}\Delta f\varphi
\geq&-2(n-1)\kappa \int_{B_{R}}f\varphi +\frac{2\left(n-2\right)}{n-1}\int_{B_{R}}\langle\nabla u,\,\nabla f\rangle \varphi\\
&+\tilde\rho(n,\,p)\int_{B_{R}}f^{2}\varphi-(\iota-1)\int_{B_{R}}f^{-1}|\nabla f|^2\varphi.
\end{aligned}
$$
Hence, it follows
\begin{equation}\label{eqno3.1}
\begin{aligned}
\int_{B_{R}}\langle \nabla f,\, \nabla\varphi \rangle\
\leq&2(n-1)\kappa \int_{B_{R}}f\varphi -\frac{2\left(n-2\right)}{n-1}\int_{B_{R}}\langle\nabla u,\,\nabla f\rangle \varphi\\
&-\tilde\rho(n,\,p)\int_{B_{R}}f^{2}\varphi+(\iota-1)\int_{B_{R}}f^{-1}|\nabla f|^2\varphi.
\end{aligned}
\end{equation}

Now, for any $\epsilon>0$ we define
$$f_\epsilon =\left(f-\epsilon\right)^+.$$
Let $\varphi=\eta^2 f_\epsilon^\theta\in W_{0}^{1,2}(B_R)$ where $0\leq\eta\in C_{0}^{\infty}(B_{R})$ and $\theta>\max\{1,\,2(\iota-1)\}$ will be determined later.
Direct computation shows that
$$
\nabla \varphi= 2f_\epsilon^\theta\eta\nabla\eta+\theta f_\epsilon^{\theta-1}\eta^2\nabla f.
$$
By substituting the above into (\ref{eqno3.1}), we derive
$$
\begin{aligned}
&\int_{B_{R}} \langle\nabla f,\,2f_\epsilon^\theta\eta\nabla\eta+\theta f_\epsilon^{\theta-1}\eta^2\nabla f\rangle\\
\leq& 2(n-1)\kappa\int_{B_{R}}f\eta^2 f_\epsilon^\theta -\frac{2\left(n-2\right)}{n-1}\int_{B_{R}}\langle\nabla u,\,\nabla f\rangle  \eta^2 f_\epsilon^\theta-\tilde\rho(n,\,p)\int_{B_{R}}f^2\eta^2 f_\epsilon^\theta\\
&+(\iota-1)\int_{B_{R}}f^{-1}|\nabla f|^2\eta^2 f_\epsilon^\theta,
\end{aligned}
$$
it follows that
$$
\begin{aligned}
&2\int_{B_{R}} f_\epsilon^\theta  \eta\langle\nabla f,\,\nabla\eta\rangle+\theta\int_{B_{R}} f_\epsilon^{\theta-1} |\nabla f|^2\eta^2\\
\leq& 2(n-1)\kappa\int_{B_{R}}f\eta^2 f_\epsilon^\theta -\frac{2\left(n-2\right)}{n-1}\int_{B_{R}}\langle\nabla u,\,\nabla f\rangle  \eta^2 f_\epsilon^\theta-\tilde\rho(n,\,p)\int_{B_{R}}f^{2}\eta^2 f_\epsilon^\theta\\
&+(\iota-1)\int_{B_{R}}f^{-1}|\nabla f|^2\eta^2 f_\epsilon^\theta.
\end{aligned}
$$
Hence
$$
\begin{aligned}
-2\int_{B_{R}} f_\epsilon^\theta  \eta|\nabla f||\nabla\eta|
\leq& 2(n-1)\kappa\int_{B_{R}}f\eta^2 f_\epsilon^\theta +\frac{2\left(n-2\right)}{n-1}\int_{B_{R}}|\nabla f| f^{\frac{1}{2}}\eta^2 f_\epsilon^\theta\\
&-\tilde\rho(n,\,p)\int_{B_{R}}f^{2}\eta^2 f_\epsilon^\theta+(\iota-1)\int_{B_{R}}f^{-1}|\nabla f|^2\eta^2 f_\epsilon^\theta\\
&-\theta\int_{B_{R}} f_\epsilon^{\theta-1} |\nabla f|^2\eta^2.
\end{aligned}
$$
By rearranging the above inequality, we have
$$
\begin{aligned}
&\theta\int_{B_{R}} f_\epsilon^{\theta-1} |\nabla f|^2\eta^2+\tilde\rho(n,\,p)\int_{B_{R}}f^{2}\eta^2 f_\epsilon^\theta\\
\leq& 2(n-1)\kappa\int_{B_{R}}f\eta^2 f_\epsilon^\theta+\frac{2\left(n-2\right)}{n-1}\int_{B_{R}}|\nabla f| f^{\frac{1}{2}}\eta^2 f_\epsilon^\theta+2\int_{B_{R}} f_\epsilon^\theta  \eta|\nabla f||\nabla\eta|\\
&+(\iota-1)\int_{B_{R}}f^{-1}|\nabla f|^2\eta^2 f_\epsilon^\theta\\
\leq& 2(n-1)\kappa\int_{B_{R}} f^{\theta+1}\eta^2 +\frac{2\left(n-2\right)}{n-1}\int_{B_{R}}|\nabla f| f^{\theta+\frac{1}{2}}\eta^2+2\int_{B_{R}} f^{\theta}\eta|\nabla f||\nabla\eta|\\
&+(\iota-1)\int_{B_{R}}f^{\theta-1}|\nabla f|^2\eta^2.
\end{aligned}
$$
By passing $\epsilon$ to $0$ we obtain
$$
\begin{aligned}
&\theta\int_{B_{R}} f^{\theta-1}|\nabla f|^2\eta^2+\tilde\rho(n,\,p)\int_{B_{R}}f^{\theta+2}\eta^2\\
\leq& 2(n-1)\kappa\int_{B_{R}} f^{\theta+1}\eta^2 +\frac{2\left(n-2\right)}{n-1}\int_{B_{R}}|\nabla f| f^{\theta+\frac{1}{2}}\eta^2+2\int_{B_{R}} f^{\theta}\eta|\nabla f||\nabla\eta|\\
&+(\iota-1)\int_{B_{R}}f^{\theta-1}|\nabla f|^2\eta^2,
\end{aligned}
$$
then, by rearranging the above we have
$$
\begin{aligned}
&(\theta+1-\iota)\int_{B_{R}} f^{\theta-1}|\nabla f|^2\eta^2+\tilde\rho(n,\,p)\int_{B_{R}}f^{\theta+2}\eta^2\\
\leq& 2(n-1)\kappa\int_{B_{R}} f^{\theta+1}\eta^2 +\frac{2\left(n-2\right)}{n-1}\int_{B_{R}}|\nabla f| f^{\theta+\frac{1}{2}}\eta^2+2\int_{B_{R}} f^{\theta}\eta|\nabla f||\nabla\eta|.
\end{aligned}
$$
Furthermore, by the choice of $\theta$ we know
\begin{equation}\label{eqno3.2}
\begin{aligned}
&\frac{\theta}{2}\int_{B_{R}} f^{\theta-1}|\nabla f|^2\eta^2+\tilde\rho(n,\,p)\int_{B_{R}}f^{\theta+2}\eta^2\\
\leq& 2(n-1)\kappa\int_{B_{R}} f^{\theta+1}\eta^2 +\frac{2\left(n-2\right)}{n-1}\int_{B_{R}}|\nabla f| f^{\theta+\frac{1}{2}}\eta^2+2\int_{B_{R}} f^{\theta}\eta|\nabla f||\nabla\eta|.
\end{aligned}
\end{equation}

On the other hand, by Young's inequality we can derive
$$
\begin{aligned}
2\int_{B_{R}} f^{\theta}\eta|\nabla f||\nabla\eta|
=&2\int_{B_{R}}f^{\frac{\theta-1}{2}}\eta\left|\nabla f\right|\times f^{\frac{\theta+1}{2}}\left|\nabla\eta\right|\\
\leq&2\int_{B_{R}}\left[\frac{\theta}{8}\frac{f^{\theta-1}\eta^2\left|\nabla f\right|^2}{2} + \frac{8}{\theta} \frac{f^{\theta+1}\left|\nabla\eta\right|^2}{2}\right]\\
\leq&\frac{\theta}{8}\int_{B_{R}}f^{\theta-1}\eta^2\left|\nabla f\right|^2+\frac{8}{\theta}\int_{B_{R}}f^{\theta+1}\left|\nabla\eta\right|^2,
\end{aligned}
$$
and
$$
\begin{aligned}
\frac{2\left(n-2\right)}{n-1}\int_{B_{R}}|\nabla f| f^{\theta+\frac{1}{2}}\eta^2
\leq&2\int_{B_{R}}f^{\theta+\frac{1}{2}}\eta^2\left|\nabla f\right|\\
\leq&2\int_{B_{R}}f^{\frac{\theta-1}{2}}\eta\left|\nabla f\right|\times f^{\frac{\theta+2}{2}}\eta\\
\leq&2\int_{B_{R}}\left[\frac{\theta}{8}\frac{f^{\theta-1}\eta^2\left|\nabla f\right|^2}{2} + \frac{8}{\theta} \frac{f^{\theta+2}\eta^2}{2}\right]\\
\leq&\frac{\theta}{8}\int_{B_{R}}f^{\theta-1}\eta^2\left|\nabla f\right|^2+\frac{8}{\theta}\int_{B_{R}}f^{\theta+2}\eta^2.
\end{aligned}
$$
Now, by picking $\theta$ such that
$$
\theta\geq\max\{\frac{16}{\tilde\rho(n,\,p)},\,2\iota\}>\max\{1,\,2(\iota-1)\}$$
which implies $$\frac{8}{\theta}\leq\frac{\tilde\rho(n,\,p)}{2},$$
in view of the above estimates of some terms on the right hand side of \eqref{eqno3.2} we can see easily from \eqref{eqno3.2} that there holds
\begin{equation}\label{eqno3.3}
\begin{aligned}
&\frac{\theta}{4}\int_{B_{R}} f^{\theta-1}|\nabla f|^2\eta^2+\frac{\tilde\rho(n,\,p)}{2}\int_{B_{R}}f^{\theta+2}\eta^2\\
\leq& 2(n-1)\kappa\int_{B_{R}} f^{\theta+1}\eta^2 +\frac{8}{\theta}\int_{B_{R}}f^{\theta+1}\left|\nabla\eta\right|^2.
\end{aligned}
\end{equation}

Besides, we have
$$
\begin{aligned}
\left|\nabla\left(f^{\frac{\theta+1 }{2}}\eta\right)\right|^2
=&\left|\eta\nabla f^{\frac{\theta+1}{2}}+f^{\frac{\theta+1}{2}}\nabla\eta\right|^2\\
\leq&2\eta^2\left|\nabla f^{\frac{\theta+1}{2}}\right|^2+2f^{\theta+1}\left|\nabla\eta\right|^2\\
=&\frac{\left(\theta+1\right)^2}{2}f^{\theta-1}\eta^2\left|\nabla f\right|^2+2f^{\theta+1}\left|\nabla\eta\right|^2,
\end{aligned}
$$
and integrate it on $B_{R}$ to obtain
$$
\begin{aligned}
\int_{B_{R}}\left|\nabla\left(f^{\frac{\theta+1 }{2}}\eta\right)\right|^2
\leq &\frac{\left(\theta+1\right)^2}{2}\int_{B_{R}}\eta^2f^{\theta-1 }\left|\nabla f\right|^2+2\int_{B_{R}}f^{\theta+1}\left|\nabla\eta\right|^2\\
\leq&\frac{2\left(\theta+1\right)^2}{\theta}\left[2(n-1)\kappa\int_{B_{R}} f^{\theta+1}\eta^2 +\frac{8}{\theta}\int_{B_{R}}f^{\theta+1}\left|\nabla\eta\right|^2 -\frac{\tilde\rho(n,\,p)}{2}\int_{B_{R}}f^{\theta+2}\eta^2\right]\\
& +2\int_{B_{R}}f^{\theta+1}\left|\nabla\eta\right|^2.
\end{aligned}
$$
Noticing that there holds true $$\theta^2<(\theta+1)^2\leq4\theta^2,$$
immediately we obtain from the inequality
$$
\begin{aligned}
\int_{B_{R}}\left|\nabla\left(f^{\frac{\theta+1 }{2}}\eta\right)\right|^2
\leq&8\theta\left[2(n-1)\kappa\int_{B_{R}} f^{\theta+1}\eta^2+ \frac{8}{\theta}\int_{B_{R}}f^{\theta+1}\left|\nabla\eta\right|^2-\frac{\tilde\rho(n,\,p)}{2}\int_{B_{R}}f^{\theta+2}\eta^2\right]\\
&+2\int_{B_{R}}f^{\theta+1}\left|\nabla\eta\right|^2\\
\leq&16(n-1)\kappa\theta\int_{B_{R}} f^{\theta+1}\eta^2 +66\int_{B_{R}}f^{\theta+1}\left|\nabla\eta\right|^2-4\theta\tilde\rho(n,\,p)\int_{B_{R}}f^{\theta+2}\eta^2.
\end{aligned}
$$
According to the Theorem \ref{thm Sobolev}, we deduce from the above inequality
$$
\begin{aligned}
\left(\int_{B_{R}}f^{(\theta+1)\lambda}\eta^{2\lambda}\right)^{\frac{1}{\lambda}}
\leq& e^{c_n\left(1+\sqrt \kappa R\right)}V^{-\frac{2}{n}}R^2\left[16(n-1)\kappa\theta\int_{B_{R}} f^{\theta+1}\eta^2 +66\int_{B_{R}}f^{\theta+1}\left|\nabla\eta\right|^2\right.\\
&\left.-4\theta\tilde\rho(n,\,p)\int_{B_{R}}f^{\theta+2}\eta^2+R^{-2}\int_{B_{R}}f^{\theta+1}\eta^2\right]\\
=& e^{c_n\left(1+\sqrt \kappa R\right)}V^{-\frac{2}{n}}\left[\left(16(n-1)\kappa\theta R^2+1\right)\int_{B_{R}} f^{\theta+1}\eta^2 \right.\\
&\left.+66R^2\int_{B_{R}}f^{\theta+1}\left|\nabla\eta\right|^2-4\theta\tilde\rho(n,\,p)R^2\int_{B_{R}}f^{\theta+2}\eta^2\right],
\end{aligned}
$$
where $V=~\mbox{Vol}~(B_{R})$ and $\lambda=\frac{n}{n-2}$. Rearranging the above inequality leads to the following
\begin{equation}\label{eqno3.4}
\begin{aligned}
&e^{-c_n\left(1+\sqrt \kappa R\right)}V^{\frac{2}{n}}\left(\int_{B_{R}}f^{(\theta+1)\lambda}\eta^{2\lambda}\right)^{\frac{1}{\lambda}}
+4\theta\tilde\rho(n,\,p)R^2\int_{B_{R}}f^{\theta+2}\eta^2\\
\leq&16(n-1)\theta \left(\kappa R^2+1\right)\int_{B_{R}} f^{\theta+1}\eta^2 +66R^2\int_{B_{R}}f^{\theta+1}\left|\nabla\eta\right|^2.
\end{aligned}
\end{equation}

Now we choose
$$
\theta_0=c_{n,\,p}\left(1+\sqrt\kappa R\right),
$$
where $$c_{n,\,p}=\max\{c_n,\,2\iota,\,\frac{16}{\tilde\rho(n,\,p)}\}.$$
Then, we can infer from \eqref{eqno3.4} that for any $\theta\geq\max\{2\iota,\,\frac{16}{\tilde\rho(n,\,p)}\}$ there holds
\begin{equation}\label{eqno3.5}
\begin{aligned}
&e^{-\theta_0}V^{\frac{2}{n}}\left(\int_{B_{R}}f^{(\theta+1)\lambda}\eta^{2\lambda}\right)^{\frac{1}{\lambda}}+
4\theta\tilde\rho(n,\,p)R^2\int_{B_{R}}f^{\theta+2}\eta^2\\
\leq&16(n-1)\theta \left(\kappa R^2+1\right)\int_{B_{R}} f^{\theta+1}\eta^2 +66R^2\int_{B_{R}}f^{\theta+1}\left|\nabla\eta\right|^2.
\end{aligned}
\end{equation}
By the definition of $\tilde\rho(n,\,p)$ in the previous section, it is easy to see that
$$
\frac{16}{\tilde\rho(n,\,p)}\geq8(n-1)\geq 8
$$
and
$$
16(n-1)\left(\kappa R^2+1\right)\leq\left[c_{n,\,p}\left(1+\sqrt\kappa R\right)\right]^2=\theta_0^2.
$$
In view of the above two facts we know that (\ref{eqno3.5}) can be rewritten as
\begin{equation}\label{eqno3.6}
\begin{aligned}
&e^{-\theta_0}V^{\frac{2}{n}}\left(\int_{B_{R}}f^{(\theta+1)\lambda}\eta^{2\lambda}\right)^{\frac{1}{\lambda}}+
4\theta\tilde\rho(n,\,p)R^2\int_{B_{R}}f^{\theta+2}\eta^2\\
\leq&\theta_0^2\theta \int_{B_{R}} f^{\theta+1}\eta^2 +66R^2\int_{B_{R}}f^{\theta+1}\left|\nabla\eta\right|^2,
\end{aligned}
\end{equation}
where $\theta\geq\max\{2\iota,\,\frac{16}{\tilde\rho(n,\,p)}\}$. Thus we complete the proof of this lemma.
\end{proof}

Using the above inequality we will infer a local estimate of $f$ stated in the following lemma, which will play a key role on the proofs of the main theorems.

\begin{lem}\label{lem3.1}
Let $\theta_1=(\theta_0+1)\lambda$ where $\theta_0=c_{n,\,p}\left(1+\sqrt\kappa R\right)$ defined as above. Then there exist a universal constant $c>0$ such that the following estimate of $\|f\|_{L^{\theta_1}\left(B_{3R/4}\right)}$ holds
\begin{equation}\label{eqno3.7}
\begin{aligned}
\|f\|_{L^{\theta_1}\left(B_{3R/4}\right)}
\leq& \frac{c}{\tilde\rho(n,\,p)} V^{\frac{1}{\theta_1}}\frac{\theta_0^2}{R^2},
\end{aligned}
\end{equation}
where $c$ is a universal constant and $\tilde\rho(n,\,p)$ is the same as above.
\end{lem}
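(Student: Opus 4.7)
The plan is to apply inequality (3.6) from the preceding lemma with $\theta=\theta_0$ together with a standard cutoff $\eta\in C_0^\infty(B_R)$ satisfying $\eta\equiv 1$ on $B_{3R/4}$, $0\le\eta\le 1$, and $|\nabla\eta|\le 8/R$. Observe that the choice $c_{n,p}=\max\{c_n,\,2\iota,\,16/\tilde\rho(n,p)\}$ guarantees $\theta_0\ge\max\{2\iota,\,16/\tilde\rho(n,p)\}$, so (3.6) is indeed applicable with this $\theta$. Using $R^2|\nabla\eta|^2\le C$, the substitution yields
\[
e^{-\theta_0}V^{2/n}\!\left(\int_{B_R}\!f^{\theta_1}\eta^{2\lambda}\right)^{\!1/\lambda}\!+4\theta_0\tilde\rho(n,p)R^2\!\!\int_{B_R}\!f^{\theta_0+2}\eta^2 \le \theta_0^3\!\!\int_{B_R}\!f^{\theta_0+1}\eta^2 + C\!\int_{B_R}\!f^{\theta_0+1}.
\]

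The crucial step is to absorb both $f^{\theta_0+1}$ terms on the right into the $f^{\theta_0+2}$ term on the left by means of the pointwise Young inequality $f^{\theta_0+1}\le \varepsilon f^{\theta_0+2}+C\varepsilon^{-(\theta_0+1)}$. Choosing $\varepsilon\sim \tilde\rho(n,p)R^2/\theta_0^2$ in the first term on the right and a larger $\varepsilon'\sim \theta_0\tilde\rho(n,p)R^2$ in the second, one arranges that the combined coefficient of $f^{\theta_0+2}$ generated on the right is at most half the coefficient $4\theta_0\tilde\rho(n,p)R^2$ standing on the left. After absorbing this back into the left side and then simply dropping the remaining positive $f^{\theta_0+2}$-term, the dominant residual is of order $\theta_0^{2\theta_0+5}\tilde\rho(n,p)^{-(\theta_0+1)}R^{-2(\theta_0+1)}V$, so restricting the left-hand integral to $B_{3R/4}$ (where $\eta=1$) gives
\[
\left(\int_{B_{3R/4}}\!f^{\theta_1}\right)^{\!1/\lambda}\le C\,e^{\theta_0}\theta_0^{2\theta_0+5}\tilde\rho(n,p)^{-(\theta_0+1)}R^{-2(\theta_0+1)}V^{1-2/n}.
\]

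Since $\theta_1/\lambda=\theta_0+1$, raising both sides to the $1/(\theta_0+1)$-th power converts the left-hand side to $\|f\|_{L^{\theta_1}(B_{3R/4})}$, while on the right the uniform bounds $e^{\theta_0/(\theta_0+1)}\le e$ and $\theta_0^{(2\theta_0+5)/(\theta_0+1)}\le c\,\theta_0^2$, combined with the exponent identity $(1-2/n)/(\theta_0+1)=1/\theta_1$, deliver exactly the estimate claimed in \eqref{eqno3.7}. The main obstacle is quantitative bookkeeping: one must pick the two Young parameters sharply enough to absorb the $f^{\theta_0+2}$ mass, then track the resulting powers of $\theta_0$, $\tilde\rho(n,p)$, and $R$ so that after the $(\theta_0+1)$-th root they land at $+2$, $-1$, and $-2$ respectively. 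The reason this works out cleanly is that $2\theta_0+5$ and $\theta_0+1$ differ only by a bounded amount, so the $\theta_0$-dependent surplus in the exponent collapses to a universal multiplicative constant, and the power of $V$ simplifies to $1/\theta_1$ on the nose.
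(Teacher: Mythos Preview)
Your argument has a genuine gap in the absorption step for the gradient term. With the plain cutoff you chose ($\eta\equiv 1$ on $B_{3R/4}$, $|\nabla\eta|\le 8/R$), the term $66R^2\int_{B_R}f^{\theta_0+1}|\nabla\eta|^2$ is bounded above by $C\int_{B_R}f^{\theta_0+1}$, an integral with \emph{no} $\eta$-weight. When you then apply Young's inequality pointwise, the resulting term $C\varepsilon'\int_{B_R}f^{\theta_0+2}$ still carries no $\eta^2$ factor, whereas the absorbing term on the left is $4\theta_0\tilde\rho(n,p)R^2\int_{B_R}f^{\theta_0+2}\eta^2$. Since $\eta^2$ may vanish on the annulus $B_R\setminus B_{3R/4}$, you have $\int_{B_R}f^{\theta_0+2}\ge \int_{B_R}f^{\theta_0+2}\eta^2$, and the inequality runs the wrong way: the unweighted integral cannot be controlled by the weighted one. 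The absorption of the first right-hand term $\theta_0^3\int f^{\theta_0+1}\eta^2$ works fine because the $\eta^2$ weight is already present there; it is only the $|\nabla\eta|^2$ term that fails.

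The paper repairs this by taking $\eta=\eta_0^{\theta_0+2}$, where $\eta_0$ is your standard cutoff. Then $|\nabla\eta|^2\le C\theta_0^2R^{-2}\,\eta^{2(\theta_0+1)/(\theta_0+2)}$, so a power of $\eta$ survives in the gradient term. One then uses H\"older's inequality
\[
\int_{B_R} f^{\theta_0+1}\eta^{2(\theta_0+1)/(\theta_0+2)}\le\Bigl(\int_{B_R} f^{\theta_0+2}\eta^2\Bigr)^{(\theta_0+1)/(\theta_0+2)}V^{1/(\theta_0+2)}
\]
followed by Young's inequality to absorb into the $\eta^2$-weighted $f^{\theta_0+2}$ term on the left. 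After that, your endgame (taking the $(\theta_0+1)$-th root and tracking exponents) is correct.
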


\begin{proof}
Since the inequality (\ref{eqno3.6}) holds true for any $\theta\geq\theta_0$, letting $\theta=\theta_0$ in (\ref{eqno3.6}) we have
\begin{equation}\label{eqno3.8}
\begin{aligned}
&e^{-\theta_0}V^{\frac{2}{n}}\left(\int_{B_{R}}f^{(\theta_0+1)\lambda}\eta^{2\lambda}\right)^{\frac{1}{\lambda}}+
4\theta_0\tilde\rho(n,\,p)R^2\int_{B_{R}}f^{\theta_0+2}\eta^2\\
\leq&\theta_0^3 \int_{B_{R}} f^{\theta_0+1}\eta^2 +66R^2\int_{B_{R}}f^{\theta_0+1}\left|\nabla\eta\right|^2.
\end{aligned}
\end{equation}

For simplicity, we denote the first term on the $RHS$ of (\ref{eqno3.8}) by $R_{1}$ ($R_2,\,L_1,\,L_2$ are understood similarly).
Now, we focus on the $R_1$. Note that if
$$f\geq\frac{\theta_0^2}{2\tilde\rho(n,\,p)R^2},$$
then
$$R_1\leq 2\theta_0\tilde\rho(n,\,p)R^2\int_{B_{R}}f^{\theta_0+2}\eta^2=\frac{L_2}{2};$$
if
$$f<\frac{\theta_0^2}{2\tilde\rho(n,\,p)R^2},$$
then
$$R_1<\theta_0^3\left(\frac{\theta_0^2}{\tilde\rho(n,\,p)R^2}\right)^{\theta_0+1}V.$$
Therefore,
\begin{equation}\label{eqno3.9}
R_1\leq\frac{L_2}{2}+\theta_0^3\left(\frac{\theta_0^2}{\tilde\rho(n,\,p)R^2}\right)^{\theta_0+1}V.
\end{equation}

Next, we need to calculate the term $R_2$ by choosing some special test functions denoted by $\eta$. Choose $\eta_0\in C_0^{\infty}\left(B_R\right)$ such that
$$
\begin{cases}
0\leq\eta_0\leq1,&\mbox{on}~B_{R},\\
\eta_0=1,&\mbox{on}~B_{3R/4},\\
\left|\nabla\eta_0\right|\leq\frac{8}{R}. &
\end{cases}
$$
Let $\eta=\eta_0^{\theta_0+2}$. Then, direct computation yields
$$
\begin{aligned}
66R^2\left|\nabla\eta\right|^2
=&66R^2(\theta_0+2)^2\eta_0^{2(\theta_0+1)}|\nabla\eta_0|^2\\
\leq& 66\times4\theta_0^2\eta_0^{2(\theta_0+1)}\times8^2\\
=&16896\theta_0^2\eta^{\frac{2(\theta_0+1)}{\theta_0+2}}.
\end{aligned}
$$
This means that one can find a universal constant $c>16896$, which is independent of any parameter, such that
$$
R_2\leq c\theta_0^2\int_{B_{R}}f^{\theta_0+1}\eta^{\frac{2(\theta_0+1)}{\theta_0+2}}.
$$
By H\"{o}lder inequality, we have
$$
\begin{aligned}
c\theta_0^2\int_{B_{R}}f^{\theta_0+1}\eta^{\frac{2(\theta_0+1)}{\theta_0+2}}
\leq& c\theta_0^2\left( \int_{B_{R}}f^{\theta_0+2}\eta^2\right)^{\frac{\theta_0+1}{\theta_0+2}}\left(\int_{B_{R}}1\right)^{\frac{1}{\theta_0+2}}\\
=& c\theta_0^2\left( \int_{B_{R}}f^{\theta_0+2}\eta^2\right)^{\frac{\theta_0+1}{\theta_0+2}}V^{\frac{1}{\theta_0+2}}.
\end{aligned}
$$
Furthermore, for any $t>0$ we use Young's inequality to obtain
$$
\begin{aligned}
&c\theta_0^2\left( \int_{B_{R}}f^{\theta_0+2}\eta^2\right)^{\frac{\theta_0+1}{\theta_0+2}}V^{\frac{1}{\theta_0+2}}\\
=&\left( \int_{B_{R}}f^{\theta_0+2}\eta^2\right)^{\frac{\theta_0+1}{\theta_0+2}}t\times\frac{c\theta_0^2}{t}V^{\frac{1}{\theta_0+2}}\\
\leq&\frac{\theta_0+1}{\theta_0+2}\left[\left( \int_{B_{R}}f^{\theta_0+2}\eta^2\right)^{\frac{\theta_0+1}{\theta_0+2}}t\right]^\frac{\theta_0+2}{\theta_0+1}
+\frac{1}{\theta_0+2}\left(\frac{ c\theta_0^2}{t}V^{\frac{1}{\theta_0+2}}\right)^{\theta_0+2}\\
=&\frac{\theta_0+1}{\theta_0+2}t^\frac{\theta_0+2}{\theta_0+1} \int_{B_{R}}f^{\theta_0+2}\eta^2
+\frac{1}{\theta_0+2}t^{-(\theta_0+2)}\left(c\theta_0^2\right)^{\theta_0+2}V.
\end{aligned}
$$
Letting
$$
t=\left[\frac{2(\theta_0+2)\theta_0\tilde\rho(n,\,p)R^2}{(\theta+1)}\right]^{\frac{\theta_0+1}{\theta_0+2}},
$$
we can see that
$$
\frac{\theta_0+1}{\theta_0+2}t^\frac{\theta_0+2}{\theta_0+1} =2\theta_0\tilde\rho(n,\,p)R^2
$$
and
$$
\begin{aligned}
\frac{1}{\theta_0+2}t^{-(\theta_0+2)}
=&\frac{1}{\theta_0+2}\left[\frac{(\theta_0+1)}{2(\theta_0+2)\theta_0\tilde\rho(n,\,p)R^2}\right]^{\theta_0+1}\\
\leq&\left(\frac{1}{\theta_0\tilde\rho(n,\,p)R^2}\right)^{\theta_0+1}.
\end{aligned}
$$
Immediately, it follows
$$
\begin{aligned}
&c\theta_0^2\left( \int_{B_{R}}f^{\theta_0+2}\eta^2\right)^{\frac{\theta_0+1}{\theta_0+2}}V^{\frac{1}{\theta_0+2}}\\
\leq&2\theta_0\tilde\rho(n,\,p)R^2\int_{B_{R}}f^{\theta_0+2}\eta^2
+\left(\frac{1}{\theta_0\tilde\rho(n,\,p)R^2}\right)^{\theta_0+1}\left(c\theta_0^2\right)^{\theta_0+2}V\\
=&\frac{L_2}{2}+ c^{\theta_0+2}V\frac{\theta_0^2}{\theta_0^{\theta_0+1}}\left(\frac{\theta_0^2}{\tilde\rho(n,\,p)R^2}\right)^{\theta_0+1}.
\end{aligned}
$$
Hence, we obtain
\begin{equation}\label{eqno3.10}
R_2\leq \frac{L_2}{2}+ c^{\theta_0+2}V\left(\frac{\theta_0^2}{\tilde\rho(n,\,p)R^2}\right)^{\theta_0+1}.
\end{equation}
Substituting (\ref{eqno3.9}) and (\ref{eqno3.10}) into (\ref{eqno3.8}), we obtain
$$
\begin{aligned}
e^{-\theta_0}V^{\frac{2}{n}}\left(\int_{B_{R}}f^{(\theta_0+1)\lambda}\eta^{2\lambda}\right)^{\frac{1}{\lambda}}
\leq&\theta_0^3\left(\frac{\theta_0^2}{\tilde\rho(n,\,p)R^2}\right)^{\theta_0+1}V
+ c^{\theta_0+2}V\left(\frac{\theta_0^2}{\tilde\rho(n,\,p)R^2}\right)^{\theta_0+1}\\
=&V\left(\frac{\theta_0^2}{\tilde\rho(n,\,p)R^2}\right)^{\theta_0+1}\left(\theta_0^3+c^{\theta_0+2}\right),
\end{aligned}
$$
which implies
$$
\begin{aligned}
\left(\int_{B_{R}}f^{(\theta_0+1)\lambda}\eta^{2\lambda}\right)^{\frac{1}{\lambda}}
\leq&e^{\theta_0}V^{1-\frac{2}{n}}\left(\frac{\theta_0^2}{\tilde\rho(n,\,p)R^2}\right)^{\theta_0+1}\left(\theta_0^3+c^{\theta_0+2}\right).
\end{aligned}
$$
Thus, we arrive at
$$
\begin{aligned}
\|f\|_{L^{\theta_1}\left(B_{3R/4}\right)}
\leq&e^{\frac{\theta_0}{\theta_0+1}}V^{\frac{1}{\theta_1}}\frac{\theta_0^2}{\tilde\rho(n,\,p)R^2}\left(\theta_0^3+c^{\theta_0+2}\right)^{\frac{1}{\theta_0+1}}\\
\leq&eV^{\frac{1}{\theta_1}}\frac{\theta_0^2}{\tilde\rho(n,\,p)R^2}\left(\theta_0^{\frac{3}{\theta_0}}+c^2\right)\\
\leq&eV^{\frac{1}{\theta_1}}\frac{\theta_0^2}{\tilde\rho(n,\,p)R^2}c^2\left(\theta_0^{\frac{3}{\theta_0}}+1\right).
\end{aligned}
$$
Here we have used the fact for any two positive number $x$ and $y$ there holds true $$(x+y)^s\leq x^s + y^s$$ as $0<s<1$. Furthermore, by the properties of the function $y(t)=t^{\frac{3}{t}}$ on $(0,\,+\infty)$ we know that for any $\theta_0>0$
$$\theta_0^{\frac{3}{\theta_0}}+1\leq e^{\frac{3}{e}} + 1=\max_{t\in
(0, +\infty)}y(t).$$
Hence, (\ref{eqno3.7}) follows immediately. Thus, the proof of Lemma \ref{lem3.1} is completed.
\end{proof}

Now, we are in the position to give the proof of Theorem \ref{thm1} by applying the Nash-Moser iteration method.
\begin{proof} Assume $v$ is a smooth positive solution of (\ref{eqno2.1}). Since (\ref{eqno1.1}) is defined on a complete Riemannian manifold $(M,\,g)$ with Ricci curvature $Ric(M)\geq-(n-1)\kappa$,  $a$ and $p$ satisfy one of the following two conditions:
\begin{enumerate}
\item  $a\left(\frac{2}{n-1}-p\right)\geq0$ and $p\geq -1$;
\item  $0< p< \frac{4}{n-1}$,
\end{enumerate}
by the above arguments on
$$
f=\left|\nabla u\right|^2,
$$
where $u=-\ln v$, now we go back to (\ref{eqno3.6}) and ignore the second term on its $LHS$ to obtain
$$
\begin{aligned}
e^{-\theta_0}V^{\frac{2}{n}}\left(\int_{B_{R}}f^{(\theta+1)\lambda}\eta^{2\lambda}\right)^{\frac{1}{\lambda}}
\leq& \theta_0^2\theta \int_{B_{R}} f^{\theta+1}\eta^2 +66R^2\int_{B_{R}}f^{\theta+1}\left|\nabla\eta\right|^2\\
\leq&66\int_{B_{R}} f^{\theta+1}\left(\theta_0^2\theta \eta^2 +R^2\left|\nabla\eta\right|^2\right),
\end{aligned}
$$
which is equivalent to
\begin{equation}\label{eqno3.11}
\left(\int_{B_{R}}f^{(\theta+1)\lambda}\eta^{2\lambda}\right)^{\frac{1}{\lambda}}
\leq 66e^{\theta_0}V^{-\frac{2}{n}}\int_{B_{R}} f^{\theta+1}\left(\theta_0^2\theta\eta^2  +R^2\left|\nabla\eta\right|^2\right).
\end{equation}

In consideration of the delicate requirements of $\theta$, we take an increasing sequence $\{\theta_k\}_{k=1}^{\infty}$ such that
$$
\theta_1=(\theta_0+1)\lambda\quad\mbox{and}\quad\theta_{k+1}=\theta_{k}\lambda,\quad k=1,\,2,\,\cdots,
$$
and a decreasing one $\{r_k\}_{k=1}^{\infty}$ such that
$$
r_k=\frac{R}{2}+\frac{R}{4^k},\quad k=1,\,2,\,\cdots\,.
$$
Then, we may choose $\{\eta_k\}_{k=1}^{\infty}\subset C_0^{\infty}(B_R)$, such that $\eta_k\in C_0^{\infty}(B_{r_k})$,
$$
\eta_k=1 \quad\mbox{in}~B_{r_{k+1}}\quad\mbox{and}\quad\left|\nabla\eta_k\right|\leq\frac{4^{k+1}}{R}.
$$
By letting $\theta+1=\theta_k$ and $\eta=\eta_k$ in (\ref{eqno3.11}), we can derive
$$
\begin{aligned}
\left(\int_{B_{R}}f^{\theta_k\lambda}
\eta_k^{2\lambda}\right)^{\frac{1}{\lambda}}
\leq& ce^{\theta_0}V^{-\frac{2}{n}}\int_{B_{R}} f^{\theta_k}\left[\theta_0^2\theta_k\eta_k^2  +R^2\left|\nabla\eta_k\right|^2\right]\\
\leq& ce^{\theta_0}V^{-\frac{2}{n}}\int_{B_{R}} f^{\theta_k}\left[\theta_0^2\theta_k\eta_k^2  +R^2\left(\frac{4^{k+1}}{R}\right)^2\right]\\
\leq& ce^{\theta_0}V^{-\frac{2}{n}}\left(\theta_0^2\theta_k  +16^{k+1}\right)\int_{B_{r_{k}}} f^{\theta_k}\\
\leq& ce^{\theta_0}V^{-\frac{2}{n}}\left[\theta_0^2(\theta_0+1)\lambda^k  +16^{k+1}\right]\int_{B_{r_{k}}} f^{\theta_k}\\
\leq& c e^{\theta_0}V^{-\frac{2}{n}}\left(\theta_0^3 16^k  +16^{k}\right)\int_{B_{r_{k}}} f^{\theta_k}\\
\leq& c e^{\theta_0}V^{-\frac{2}{n}}\theta_0^3 16^k \int_{B_{r_{k}}} f^{\theta_k}.
\end{aligned}
$$
Thus,
$$
\begin{aligned}
\left(\int_{B_{r_{k+1}}}f^{\theta_{k+1}}\right)^{\frac{1}{\theta_{k+1}}}
\leq& \left(c e^{\theta_0}V^{-\frac{2}{n}}\theta_0^3\right)^{\frac{1}{\theta_k}} 16^{\frac{k}{\theta_k}}\left( \int_{B_{r_{k}}} f^{\theta_k}\right)^{\frac{1}{\theta_k}},\\
\end{aligned}
$$
where $c$ is a universal positive constant which does not depend on any parameter. This means that
$$
\|f\|_{L^{\theta_{k+1}}\left(B_{r_{k+1}}\right)}\leq
\left(ce^{\theta_0}V^{-\frac{2}{n}}\theta_0^3\right)^{\frac{1}{\theta_{k}}}16^{\frac{k}{\theta_k}} \|f\|_{L^{\theta_{k}}\left(B_{r_{k}}\right)}.
$$
By iteration we have
\begin{equation}\label{eqno3.12}
\|f\|_{L^{\theta_{k+1}}\left(B_{r_{k+1}}\right)}\leq
\left(ce^{\theta_0}V^{-\frac{2}{n}}\theta_0^3\right)^{\sum_{i=1}^{k}\frac{1}{\theta_{i}}}16^{\sum_{i=1}^{k}\frac{i}{\theta_i}} \|f\|_{L^{\theta_{1}}\left(B_{3R/4}\right)}.
\end{equation}
In view of
$$
\begin{aligned}
\sum_{i=1}^{\infty}\frac{1}{\theta_{i}}
=&\frac{1}{\theta_0+1}\sum_{i=1}^{\infty}\frac{1}{\lambda^i}\\
=&\frac{1}{\theta_0+1}\lim\limits_{i\to+\infty}\frac{\frac{1}{\lambda}(1-\frac{1}{\lambda^i})}{1-\frac{1}{\lambda}}\\
=&\frac{n-2}{\theta_0+1}\lim\limits_{i\to+\infty}(1-\frac{1}{\lambda^i})\\
=&\frac{n-2}{\theta_0+1}\\
=&\frac{n}{2\theta_1}
\end{aligned}
$$
and
$$
\begin{aligned}
\sum_{i=1}^{\infty}\frac{i}{\theta_{i}}
=&\frac{1}{\theta_0+1}\sum_{i=1}^{\infty}\frac{i}{\lambda^i}\\
=&\frac{1}{\theta_0+1}\frac{1}{\lambda-1}\sum_{i=1}^{\infty}\left[(\lambda-1)\frac{i}{\lambda^i}\right]\\
=&\frac{1}{\theta_0+1}\frac{n-2}{2}\sum_{i=1}^{\infty}\left(\frac{i}{\lambda^{i-1}}-\frac{i}{\lambda^{i}}\right)\\
=&\frac{n}{2\theta_1}\left[1+\lim\limits_{i\to+\infty}\left(\frac{1}{\lambda}\frac{1-\frac{1}{\lambda^{i-1}}}{1-\frac{1}{\lambda}}-\frac{i}{\lambda^{i}}\right)\right]\\
=&\frac{n}{2\theta_1}\left(1+\frac{1}{\lambda-1}\right)\\
=&\frac{n^2}{4\theta_1},
\end{aligned}
$$
by letting $k\rightarrow\infty$ in (\ref{eqno3.12}) we obtain the following
$$
\|f\|_{L^{\infty}\left(B_{R/2}\right)}\leq c(n)V^{-\frac{1}{\theta_1}} \|f\|_{L^{\theta_{1}}\left(B_{3R/4}\right)}.
$$
By Lemma \ref{lem3.1}, we conclude from the above inequality that
$$
\|f\|_{L^{\infty}\left(B_{R/2}\right)}\leq c(n) \frac{\theta_0^2}{\tilde\rho(n,\,p)R^2}.
$$
The definition of $\theta_0$ tells us that it follows
$$
\begin{aligned}
\|f\|_{L^{\infty}\left(B_{R/2}\right)}
\leq& c\left(n,\,p,\,\tilde\rho(n,\,p)\right)\frac{\left(1+\sqrt\kappa R\right)^2}{R^2}\\
=& c\left(n,\,p\right)\frac{\left(1+\sqrt\kappa R\right)^2}{R^2}.
\end{aligned}
$$
Thus, we complete the proof of Theorem \ref{thm1}.
\end{proof}

Now, we turn to proving Corollary \ref{cor1}.

\begin{proof}
Let $(M,\,g)$ be a noncompact complete Riemannian manifold with nonnegative Ricci curvature.  We assume $v$ is a smooth and positive solution of (\ref{eqno1.1}) with $\mu\geq0$, $a\in\mathbb{R}$, $b\geq0$, $p\geq-1$ and $q\geq1$.
If $a$ and $p$ satisfy one of the following conditions:
\begin{enumerate}
\item  $a\left(\frac{2}{n-1}-p\right)\geq0$ and $p\geq -1$;
\item  $0< p< \frac{4}{n-1}$,
\end{enumerate}
then Theorem \ref{thm1} tells us that there holds for any $B_{R}\subset M$,
$$
\frac{\left|\nabla v\right|^2}{v^2}\leq\frac{c(n,\,p)}{R^2},\quad\mbox{on}~B_{R/2}.
$$
Letting $R\rightarrow\infty$ yields $\nabla v=0$. Therefore, $v$ is a positive constant on $M$.

Furthermore, if one of the following conditions holds true:
\begin{enumerate}
\item $a>0$ and $-1\leq p <\frac{4}{n-1}$;
\item $a=0$, $\mu+b\neq0$, $p\geq -1$;
\item $a<0$, $b=\mu=0$ and $p>0$.
\end{enumerate}
then we have
\begin{equation}\label{neq}
\Delta v +\mu v+av^{p+1}+bv^{-q+1}\neq0,
\end{equation}
this is a contradiction which means that $v$ could not be the solution to (\ref{eqno1.1}). Hence we know that (\ref{eqno1.1}) does not admit any positive solution.

On the other hands, if $a<0$, $p>0$ and at least one of $b$ and $\mu$ is positive, we do not know whether (\ref{neq}) is true or not. Therefore, we can only infer that in this case $v$ is a positive constant on $M$. Thus we complete the proof of Corollary \ref{cor1}.
\end{proof}

\section{Proof of Theorem \ref{thm2}}
Assume $v$ is a smooth positive solution of \eqref{eqno1.1}. Now, we turn to discussing \eqref{eqno1.1} in the case the coefficients $\mu<0$, $a>0$ and $b>0$ which are real constant throughout this section, unless otherwise mentioned.

\begin{lem}\label{lem4.1}
Let $a>0$, $b>0$ and $\max\{-a,\,-b\}<\mu<0$. Assume that $f=\left|\nabla u\right|^2$ where $u=-\ln v$ and $v$ is a smooth positive solution of (\ref{eqno1.1}). Then, for any $q\geq1$, there exist $\iota\geq1$ and a positive constant $\alpha\left(n,\,p,\,\delta\right)$, which depend on $n$, $p$ and $\delta$, such that
\begin{equation}\label{eqno4.2}
\frac{\Delta \left(f^\iota\right)}{\iota f^{\iota-1}}
\geq-2(n-1)\kappa f+\frac{2(n-2)\langle\nabla u,\,\nabla f\rangle}{n-1}+\alpha\left(n,\,p,\,\delta\right)f^2,
\end{equation}
if $p$ satisfies $$0<p<\frac{4(1-\delta)}{n-1}$$
where $\delta=\frac{\mu}{\max\{-a,\,-b\}}$.
\end{lem}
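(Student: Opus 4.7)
The plan is to mimic the proof of case (2) of Lemma \ref{lem2.2}, but with an extra step that turns the newly appearing negative contribution $2pf\mu$ into a controlled negative multiple of $fX$, where $X:=\mu+ae^{-pu}+be^{qu}$. The ability to do so rests on a uniform positive lower bound for $X$, which is the single genuinely new ingredient compared with Lemma \ref{lem2.2}.

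First, I would start from \eqref{eqno2.4} and, exactly as in case (2) of Lemma \ref{lem2.2}, use $q\geq 1>0\geq -p$ to estimate
$$2f(bqe^{qu}-ape^{-pu})\geq -2pf(ae^{-pu}+be^{qu}) = -2pf(X-\mu).$$
Setting $A=\frac{2(2\iota-1)}{2(\iota-1)(n-1)+n}$ and collecting terms, this rewrites the right-hand side of \eqref{eqno2.4} as
$$-2(n-1)\kappa f+\tfrac{2f^2}{n-1}+\tfrac{2(n-2)}{n-1}\langle\nabla u,\nabla f\rangle + AX^2 + 2fX\bigl(\tfrac{2}{n-1}-p\bigr)+2pf\mu.$$
In the case $\mu\geq 0$ one simply drops $2pf\mu\geq 0$; here we must work harder.

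The main obstacle — and the key step — is to control $2pf\mu$ using the hypothesis $\max\{-a,-b\}<\mu<0$. Note that $\delta=\mu/\max\{-a,-b\}\in(0,1)$ and $|\mu|=\delta\min\{a,b\}$. Since $p>0$ and $q\geq 1>0$, for every real $u$ at least one of $e^{-pu}$ and $e^{qu}$ is $\geq 1$, giving the pointwise bound
$$ae^{-pu}+be^{qu}\geq \min\{a,b\},\qquad\text{hence}\qquad X\geq (1-\delta)\min\{a,b\}>0.$$
This yields $|\mu|\leq \tfrac{\delta}{1-\delta}X$, and since $f,X\geq 0$ and $p>0$,
$$2pf\mu = -2p|\mu|f\geq -\tfrac{2p\delta}{1-\delta}fX.$$
Substituting this and combining with $2fX(\tfrac{2}{n-1}-p)$ collapses the two linear-in-$X$ contributions into a single term $2fX\bigl(\tfrac{2}{n-1}-\tfrac{p}{1-\delta}\bigr)$.

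Having reduced to $AX^2+2fX\bigl(\tfrac{2}{n-1}-\tfrac{p}{1-\delta}\bigr)$, I complete the square in $X$ exactly as in Lemma \ref{lem2.2}, picking up a penalty $-\tfrac{f^2}{A}\bigl(\tfrac{2}{n-1}-\tfrac{p}{1-\delta}\bigr)^2$. The resulting coefficient of $f^2$ is
$$\alpha(n,p,\delta)=\tfrac{2}{n-1}-\tfrac{2(\iota-1)(n-1)+n}{2(2\iota-1)}\bigl(\tfrac{2}{n-1}-\tfrac{p}{1-\delta}\bigr)^2.$$
Finally, as in Lemma \ref{lem2.3}, the monotone ratio $\tfrac{2(\iota-1)(n-1)+n}{2(2\iota-1)}$ decreases to $(n-1)/2$ as $\iota\to\infty$, so one can pick $\iota=\iota_{n,p,\delta}$ large enough to make $\alpha(n,p,\delta)>0$ provided $\bigl(\tfrac{2}{n-1}-\tfrac{p}{1-\delta}\bigr)^2<\tfrac{4}{(n-1)^2}$, i.e.\ $0<p<\tfrac{4(1-\delta)}{n-1}$, which is precisely the hypothesis. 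This yields \eqref{eqno4.2}.
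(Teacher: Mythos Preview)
Your proposal is correct and follows essentially the same route as the paper: both rely on the pointwise bound $ae^{-pu}+be^{qu}\geq\min\{a,b\}$ to deduce $X\geq(1-\delta)(ae^{-pu}+be^{qu})>0$, reduce to a quadratic $AX^2+2fX\bigl(\tfrac{2}{n-1}-\tfrac{p}{1-\delta}\bigr)$, complete the square, and then choose $\iota$ large. The only cosmetic difference is that the paper substitutes the auxiliary variable $\tau=(1-\delta)(ae^{-pu}+be^{qu})$ and replaces $X$ by $\tau$ in the quadratic and linear terms, whereas you keep $X$ throughout and absorb $2pf\mu$ via $|\mu|\leq\tfrac{\delta}{1-\delta}X$; both manipulations are equivalent and yield the identical $\alpha(n,p,\delta)$.
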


\begin{proof}
From the previous arguments on $u$, we have
$$
\Delta u=f+\mu+ae^{-pu}+be^{qu},
$$
where $f=\left|\nabla u\right|^2$. Since $b\geq0$, by Bochner formula we have that, for any $q\geq1$ and $p\geq-1$, at the point $f\neq0$ there holds
$$
\Delta f \geq 2|\nabla ^2 u|^2 -2(n-1)\kappa f+ 2\langle \nabla u,\,\nabla f\rangle - 2fp(ae^{-pu}+be^{qu}).
$$

We choose a suitable local orthonormal frame $\left\{\xi_i \right\}_{i=1}^{n}$ by the same way as we chose it in Section 2. Now, by performing a similar argument with the previous section we can infer that, for any $\iota\geq1$ there holds
\begin{equation}\label{eqno4.1}
\begin{aligned}
\frac{\Delta \left(f^\iota\right)}{\iota f^{\iota-1}}
\geq&-2(n-1)\kappa f+\frac{2f^2}{n-1}+\frac{2(n-2)\langle\nabla u,\,\nabla f\rangle}{n-1}\\
&+\frac{2(2\iota-1)}{2(\iota-1)(n-1)+n}\left(\mu+ae^{-pu}+be^{qu}\right)^2\\
&+4f\frac{\mu+ae^{-pu}+be^{qu}}{n-1}-2pf \left(ae^{-pu}+ be^{qu} \right).
\end{aligned}
\end{equation}

If $a>0$, $b>0$ and $p\geq0$, then we have that for any $u\in\mathbb{R}$,
$$
ae^{-pu}+be^{qu}\geq\min\{a,\,b\}.
$$
For any $\mu<0$, we set
$$
\delta=\frac{\mu}{\max \{-a,\,-b\}}>0.
$$
Then, obviously
$$
\mu+ae^{-pu}+be^{qu}=ae^{-pu}+be^{qu}-\delta\min\{a,\,b\}\geq(1-\delta)(ae^{-pu}+be^{qu}).
$$
Moreover, since $\mu$ satisfies $$\max\{-a,\,-b\}<\mu<0,$$
obviously, we have $\delta\in(0,\,1)$. Hence, \eqref{eqno4.1} can be rewritten as
$$
\begin{aligned}
\frac{\Delta \left(f^\iota\right)}{\iota f^{\iota-1}}
\geq &-2(n-1)\kappa f+\frac{2f^2}{n-1}+\frac{2(n-2)\langle\nabla u,\,\nabla f\rangle}{n-1}\\
&+\frac{2(2\iota-1)}{2(\iota-1)(n-1)+n}(1-\delta)^2\left(ae^{-pu}+be^{qu}\right)^2\\
&+2f(1-\delta)(ae^{-pu}+be^{qu})\left(\frac{2}{n-1}-\frac {p}{1-\delta} \right).
\end{aligned}
$$
For the sake of convenience, we denote
$$
\tau=(1-\delta)(ae^{-pu}+be^{qu}).
$$
Then, the above inequality reads
$$
\begin{aligned}
\frac{\Delta \left(f^\iota\right)}{\iota f^{\iota-1}}
\geq&-2(n-1)\kappa f+\frac{2f^2}{n-1}+\frac{2(n-2)\langle\nabla u,\,\nabla f\rangle}{n-1}\\
&+\frac{2(2\iota-1)}{2(\iota-1)(n-1)+n}\tau^2+2f\tau\left(\frac{2}{n-1}-\frac{p}{1-\delta}\right).
\end{aligned}
$$
Since
$$
\begin{aligned}
&\frac{2(2\iota-1)}{2(\iota-1)(n-1)+n}\tau^2+2f\tau\left(\frac{2}{n-1}-\frac{p}{1-\delta}\right)\\
\geq&-\frac{2(\iota-1)(n-1)+n}{2(2\iota-1)}\left(\frac{2}{n-1}-\frac{p}{1-\delta}\right)^2f^2,
\end{aligned}
$$
we can see that
\begin{equation}\label{eqno4.3}
\begin{aligned}
\frac{\Delta \left(f^\iota\right)}{\iota f^{\iota-1}}
\geq&-2(n-1)\kappa f+\frac{2(n-2)\langle\nabla u,\,\nabla f\rangle}{n-1}\\
&+\left[\frac{2}{n-1}-\frac{2(\iota-1)(n-1)+n}{2(2\iota-1)}\left(\frac{2}{n-1}-\frac{p}{1-\delta}\right)^2\right]f^2.
\end{aligned}
\end{equation}
Obviously, when
$$0<p<\frac{4(1-\delta)}{n-1},$$
there holds
$$0\leq\left|\frac{2}{n-1}-\frac{p}{1-\delta}\right|<\frac{2}{n-1}.$$
According to the properties of the function $$y(\iota)=\frac{2(\iota-1)(n-1)+n}{2(2\iota-1)},$$
we can choose $\iota=\iota(n,\,p,\,\delta)$ large enough such that
$$
\frac{2(\iota-1)(n-1)+n}{2(2\iota-1)}\left(\frac{2}{n-1}-\frac{p}{1-\delta}\right)^2<\frac{2}{n-1}.
$$
Set
$$\alpha\left(n,\,p,\,\delta\right)=\frac{2}{n-1}-\frac{2(\iota-1)(n-1)+n}{2(2\iota-1)}\left(\frac{2}{n-1}-\frac{p}{1-\delta}\right)^2.$$
Here, $\iota=\iota(n,\,p,\,\delta)$ is chosen in the above. Obviouly,
$$\alpha\left(n,\,p,\,\delta\right)\in\left(0,\,\frac{2}{n-1}\right].$$
Thus, we get (\ref{eqno4.2}) and finish the proof of  Lemma \ref{lem4.1}.\end{proof}

\begin{lem}
Let $a>0$, $b>0$ and $\max\{-a,\,-b\}<\mu<0$. Assume that $f=\left|\nabla u\right|^2$ where $u=-\ln v$ and $v$ is a smooth positive solution of (\ref{eqno1.1}). Then, there exists $\gamma_0=c_{n,p}(1+\sqrt{\kappa}R)$, where $c_{n, p}=\max\{c_n,\,2\iota,\,\frac{16}{\alpha(n,\,p,\,\delta)}\}$ is a positive constant depending on $n$, $p$ and $\alpha(n,\,p,\,\delta)$ which is defined as above, such that for any $0\leq\eta\in C_{0}^{\infty}(B_{R})$ and any $\gamma\geq\gamma_0$ large enough there holds true
\begin{equation}\label{eqno4.9}
\begin{aligned}
&e^{-\gamma_0}V^{\frac{2}{n}}\left(\int_{B_{R}}f^{(\gamma+1)\lambda}\eta^{2\lambda}\right)^{\frac{1}{\lambda}}+
4\gamma\alpha(n,\,p,\,\delta)R^2\int_{B_{R}}f^{\gamma+2}\eta^2\\
\leq&\gamma_0^2\gamma \int_{B_{R}} f^{\gamma+1}\eta^2 +66R^2\int_{B_{R}}f^{\gamma+1}\left|\nabla\eta\right|^2.
\end{aligned}
\end{equation}
Here $B_R$ is a geodesic Ball in $(M, g)$ and $V$ is the volume of $B_R$.
\end{lem}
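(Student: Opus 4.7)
The plan is to imitate almost verbatim the proof of the analogous integral inequality in Section 3, using Lemma \ref{lem4.1} as the pointwise replacement for Lemma \ref{lem2.3}. Concretely, the pointwise estimate \eqref{eqno4.2} has exactly the same structure as \eqref{eqno2.10}, with $\tilde\rho(n,p)$ simply replaced by the positive constant $\alpha(n,p,\delta)$ and with the same choice of integer $\iota\geq 1$ appearing in Lemma \ref{lem4.1}. Since the sign conditions $\mu<0$, $a>0$, $b>0$ were used only to derive \eqref{eqno4.2}, once that estimate is in hand the coefficient functions $\mu+ae^{-pu}+be^{qu}$ no longer appear and the whole derivation becomes insensitive to the sign of $\mu$.

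First, I would fix a smooth solution $v$ and, away from the zero set of $f$, integrate \eqref{eqno4.2} against a nonnegative test function $\varphi\in W^{1,2}_0(B_R)$ compactly supported in $\{f\neq 0\}$, exactly as in the derivation of \eqref{eqno3.1}. Using the identity \eqref{eqno2.6} to trade $\Delta(f^\iota)/(\iota f^{\iota-1})$ for $\Delta f + (\iota-1)f^{-1}|\nabla f|^2$ and integrating by parts yields the analogue of \eqref{eqno3.1}, with $\alpha(n,p,\delta)$ in place of $\tilde\rho(n,p)$.

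Next, for $\epsilon>0$ I would choose the standard test function $\varphi=\eta^2 f_\epsilon^\theta$ with $f_\epsilon=(f-\epsilon)^+$ and $\theta\geq\max\{1,2(\iota-1)\}$, expand $\nabla\varphi$, and group terms so the iteration coefficient $(\theta+1-\iota)$ appears in front of $\int f^{\theta-1}|\nabla f|^2\eta^2$. Passing $\epsilon\to 0$, applying Young's inequality to the two cross terms (one producing $\frac{\theta}{8}\int f^{\theta-1}|\nabla f|^2\eta^2+\frac{8}{\theta}\int f^{\theta+1}|\nabla\eta|^2$, the other producing $\frac{\theta}{8}\int f^{\theta-1}|\nabla f|^2\eta^2+\frac{8}{\theta}\int f^{\theta+2}\eta^2$), and restricting to $\theta\geq\max\{2\iota,16/\alpha(n,p,\delta)\}$ so that $8/\theta\leq \alpha(n,p,\delta)/2$, I would obtain the counterpart of \eqref{eqno3.3}:
\begin{equation*}
\frac{\theta}{4}\int_{B_R} f^{\theta-1}|\nabla f|^2\eta^2 + \frac{\alpha(n,p,\delta)}{2}\int_{B_R} f^{\theta+2}\eta^2 \leq 2(n-1)\kappa \int_{B_R} f^{\theta+1}\eta^2 + \frac{8}{\theta}\int_{B_R} f^{\theta+1}|\nabla\eta|^2.
\end{equation*}

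Finally, I would apply $|\nabla(f^{(\theta+1)/2}\eta)|^2\leq \frac{(\theta+1)^2}{2}f^{\theta-1}\eta^2|\nabla f|^2 + 2f^{\theta+1}|\nabla\eta|^2$ together with Saloff-Coste's Sobolev inequality (Theorem \ref{thm Sobolev}) on $B_R$ to produce the $L^{(\theta+1)\lambda}$-term on the left. Selecting the constant $c_{n,p}=\max\{c_n,2\iota,16/\alpha(n,p,\delta)\}$ and $\gamma_0=c_{n,p}(1+\sqrt{\kappa}R)$, the Ricci-curvature contribution $16(n-1)(\kappa R^2+1)\theta$ is bounded by $\gamma_0^2\theta$, and the exponential factor from Saloff-Coste is at most $e^{\gamma_0}$. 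Rearranging then gives \eqref{eqno4.9} for every $\gamma\geq\gamma_0$, as required.

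I do not anticipate any substantive obstacle: every estimate used is identical in form to the one established in Section 3, and the only new ingredient, the positivity of $\alpha(n,p,\delta)$ together with the range of $\iota$, has already been secured in Lemma \ref{lem4.1}. The one place requiring care is the bookkeeping of the constant $c_{n,p}$, where one must verify that the choice $16/\alpha(n,p,\delta)$ dominates $8(n-1)$ so that $16(n-1)(\kappa R^2+1)\leq\gamma_0^2$; this is precisely the analogue of the inequality $16/\tilde\rho(n,p)\geq 8(n-1)$ used in the $\mu\geq 0$ case and follows from $\alpha(n,p,\delta)\leq 2/(n-1)$.
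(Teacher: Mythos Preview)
Your proposal is correct and follows essentially the same route as the paper: it replaces $\tilde\rho(n,p)$ by $\alpha(n,p,\delta)$, invokes Lemma \ref{lem4.1} in place of Lemma \ref{lem2.3}, and then repeats verbatim the Section~3 argument (test function $\varphi=\eta^2 f_\epsilon^\theta$, Young's inequality, Saloff--Coste, and the choice of $\gamma_0$). The only cosmetic difference is that the paper handles the set $\{f=0\}$ by announcing ``by the approximation used in Section 3 we may assume $f>0$'' and then works directly with $\varphi=\eta^2 f^\gamma$, whereas you spell out the $f_\epsilon$ truncation explicitly; these are the same device.
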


\begin{proof}
By the approximation used in Section 3, we may assume that the function $f>0$ on $B_{R}$ without loss of generality.
According to the Lemma \ref{lem4.1}, we can see that for any $q\geq 1$ and any function $\varphi\in W_{0}^{1,2}(B_R)$ with $\varphi\geq 0$, there holds true
$$
\int_{B_{R}}\frac{\Delta \left(f^\iota\right)}{\iota f^{\iota-1}}\varphi
\geq-2(n-1)\kappa \int_{B_{R}}f\varphi +\frac{2\left(n-2\right)}{n-1}\int_{B_{R}}\langle\nabla u,\,\nabla f\rangle \varphi+\alpha\left(n,\,p,\,\delta\right)\int_{B_{R}}f^{2}\varphi,
$$
if $\iota=\iota(n,\,p,\,\delta)$ is chosen as above.

Substituting (\ref{eqno2.6}) into the above, we arrive at
\begin{equation}\label{eqno4.4}
\begin{aligned}
\int_{B_{R}}\langle \nabla f,\, \nabla\varphi \rangle\
\leq&2(n-1)\kappa \int_{B_{R}}f\varphi -\frac{2\left(n-2\right)}{n-1}\int_{B_{R}}\langle\nabla u,\,\nabla f\rangle \varphi\\
&-\alpha\left(n,\,p,\,\delta\right)\int_{B_{R}}f^{2}\varphi+(\iota-1)\int_{B_{R}}f^{-1}|\nabla f|^2\varphi.
\end{aligned}
\end{equation}

Now, let $$\varphi=\eta^2 f^\gamma\in W_{0}^{1,2}(B_R),$$
where $\eta\in C_{0}^{\infty}(B_{R}),\,\eta\geq0$ and $\gamma>\max\{1,\,2(\iota-1)\}$ will be determined later.
Hence,
$$
\begin{aligned}
&\int_{B_{R}} \langle\nabla f,\,2f^\gamma\eta\nabla\eta+\gamma f^{\gamma-1}\eta^2\nabla f\rangle\\
\leq& 2(n-1)\kappa\int_{B_{R}}f^{\gamma+1}\eta^2 -\frac{2\left(n-2\right)}{n-1}\int_{B_{R}}\langle\nabla u,\,\nabla f\rangle f^{\gamma}\eta^2\\
&-\alpha\left(n,\,p,\,\delta\right)\int_{B_{R}}f^{\gamma+2}\eta^2+(\iota-1)\int_{B_{R}}f^{\gamma-1}|\nabla f|^2\eta^2 ,
\end{aligned}
$$
which implies
$$
\begin{aligned}
&2\int_{B_{R}} f^{\gamma}\eta\langle\nabla f,\,\nabla\eta\rangle+\gamma\int_{B_{R}} f^{\gamma-1}|\nabla f|^2\eta^2\\
\leq& 2(n-1)\kappa\int_{B_{R}}f^{\gamma+1}\eta^2 -\frac{2\left(n-2\right)}{n-1}\int_{B_{R}}\langle\nabla u,\,\nabla f\rangle f^{\gamma}\eta^2 \\
&-\alpha\left(n,\,p,\,\delta\right)\int_{B_{R}}f^{\gamma+2}\eta^2+(\iota-1)\int_{B_{R}}f^{\gamma-1}|\nabla f|^2\eta^2.
\end{aligned}
$$
It is not difficult to see that
$$
\begin{aligned}
&-2\int_{B_{R}} f^{\gamma}\eta|\nabla f||\nabla\eta|+(\gamma+1-\iota)\int_{B_{R}} f^{\gamma-1}|\nabla f|^2\eta^2\\
\leq & 2(n-1)\kappa\int_{B_{R}}f^{\gamma+1}\eta^2 +\frac{2\left(n-2\right)}{n-1}\int_{B_{R}}|\nabla f| f^{\gamma+\frac{1}{2}}\eta^2-\alpha\left(n,\,p,\,\delta\right)\int_{B_{R}}f^{\gamma+2}\eta^2.
\end{aligned}
$$
By rearranging the above inequality, we have
$$
\begin{aligned}
&(\gamma+1-\iota)\int_{B_{R}} f^{\gamma-1}|\nabla f|^2\eta^2+\alpha\left(n,\,p,\,\delta\right)\int_{B_{R}}f^{\gamma+2}\eta^2\\
\leq& 2(n-1)\kappa\int_{B_{R}}f^{\gamma+1}\eta^2 +\frac{2\left(n-2\right)}{n-1}\int_{B_{R}}|\nabla f| f^{\gamma+\frac{1}{2}}\eta^2+2\int_{B_{R}} f^{\gamma}\eta|\nabla f||\nabla\eta|,
\end{aligned}
$$
and it follows
\begin{equation}\label{eqno4.5}
\begin{aligned}
&\frac{\gamma}{2}\int_{B_{R}} f^{\gamma-1}|\nabla f|^2\eta^2+\alpha\left(n,\,p,\,\delta\right)\int_{B_{R}}f^{\gamma+2}\eta^2\\
\leq& 2(n-1)\kappa\int_{B_{R}}f^{\gamma+1}\eta^2 +\frac{2\left(n-2\right)}{n-1}\int_{B_{R}}|\nabla f| f^{\gamma+\frac{1}{2}}\eta^2+2\int_{B_{R}} f^{\gamma}\eta|\nabla f||\nabla\eta|.
\end{aligned}
\end{equation}

By Young's inequality, we can derive that the third term on the right hand side of \eqref{eqno4.5} satisfies
$$
\begin{aligned}
2\int_{B_{R}} f^{\gamma}\eta|\nabla f||\nabla\eta|
\leq&\frac{\gamma}{8}\int_{B_{R}}f^{\gamma-1}\eta^2\left|\nabla f\right|^2+\frac{8}{\gamma}\int_{B_{R}}f^{\gamma+1}\left|\nabla\eta\right|^2,
\end{aligned}
$$
and the second term on the right hand side of \eqref{eqno4.5} satisfies
$$
\begin{aligned}
\frac{2\left(n-2\right)}{n-1}\int_{B_{R}}|\nabla f| f^{\gamma+\frac{1}{2}}\eta^2
\leq&\frac{\gamma}{8}\int_{B_{R}}f^{\gamma-1}\eta^2\left|\nabla f\right|^2+\frac{8}{\gamma}\int_{B_{R}}f^{\gamma+2}\eta^2.
\end{aligned}
$$
By picking $\gamma$ such that
$$
\gamma\geq\max\{\frac{16}{\alpha(n,\,p,\,\delta)},\,2\iota\}>\max\{1,\,2(\iota-1)\}$$
which implies
$$\frac{8}{\gamma}\leq\frac{\alpha\left(n,\,p,\,\delta\right)}{2},$$
we can deduce from (\ref{eqno4.5}) that there holds true
\begin{equation}\label{eqno4.6}
\begin{aligned}
&\frac{\gamma}{4}\int_{B_{R}} f^{\gamma-1}|\nabla f|^2\eta^2+\frac{\alpha\left(n,\,p,\,\delta\right)}{2}\int_{B_{R}}f^{\gamma+2}\eta^2\\
\leq& 2(n-1)\kappa\int_{B_{R}} f^{\gamma+1}\eta^2 +\frac{8}{\gamma}\int_{B_{R}}f^{\gamma+1}\left|\nabla\eta\right|^2.
\end{aligned}
\end{equation}

Besides, we have
$$
\begin{aligned}
\int_{B_{R}}\left|\nabla\left(f^{\frac{\gamma+1 }{2}}\eta\right)\right|^2
\leq &\frac{\left(\gamma+1\right)^2}{2}\int_{B_{R}}\eta^2f^{\gamma-1 }\left|\nabla f\right|^2+2\int_{B_{R}}f^{\gamma+1}\left|\nabla\eta\right|^2\\
\leq&16(n-1)\kappa\gamma\int_{B_{R}} f^{\gamma+1}\eta^2 +66\int_{B_{R}}f^{\gamma+1}\left|\nabla\eta\right|^2-4\gamma\alpha(n,\,p,\,\delta)\int_{B_{R}}f^{\gamma+2}\eta^2.
\end{aligned}
$$
According to the Theorem \ref{thm Sobolev}, we obtain
$$
\begin{aligned}
\left(\int_{B_{R}}f^{(\gamma+1)\lambda}\eta^{2\lambda}\right)^{\frac{1}{\lambda}}
\leq& e^{c_n\left(1+\sqrt \kappa R\right)}V^{-\frac{2}{n}}R^2\left[16(n-1)\kappa\gamma\int_{B_{R}} f^{\gamma+1}\eta^2 +66\int_{B_{R}}f^{\gamma+1}\left|\nabla\eta\right|^2\right.\\
&\left.-4\gamma\alpha(n,\,p,\,\delta)\int_{B_{R}}f^{\gamma+2}\eta^2+R^{-2}\int_{B_{R}}f^{\gamma+1}\eta^2\right]\\
=& e^{c_n\left(1+\sqrt \kappa R\right)}V^{-\frac{2}{n}}\left[\left(16(n-1)\kappa\gamma R^2+1\right)\int_{B_{R}} f^{\gamma+1}\eta^2 \right.\\
&\left.+66R^2\int_{B_{R}}f^{\gamma+1}\left|\nabla\eta\right|^2-4\gamma\alpha(n,\,p,\,\delta)R^2\int_{B_{R}}f^{\gamma+2}\eta^2\right],
\end{aligned}
$$
where $V=~\mbox{Vol}~(B_{R})$ and $\lambda=\frac{n}{n-2}$.
Hence, it follows from the above inequality
\begin{equation}\label{eqno4.7}
\begin{aligned}
&e^{-c_n\left(1+\sqrt \kappa R\right)}V^{\frac{2}{n}}\left(\int_{B_{R}}f^{(\gamma+1)\lambda}\eta^{2\lambda}\right)^{\frac{1}{\lambda}}
+4\gamma\alpha(n,\,p,\,\delta)R^2\int_{B_{R}}f^{\gamma+2}\eta^2\\
\leq&16(n-1)\gamma \left(\kappa R^2+1\right)\int_{B_{R}} f^{\gamma+1}\eta^2 +66R^2\int_{B_{R}}f^{\gamma+1}\left|\nabla\eta\right|^2.
\end{aligned}
\end{equation}

Now we choose
$$
\gamma_0=c_{n,\,p}\left(1+\sqrt\kappa R\right),
$$
where $$c_{n,\,p}=\max\{c_n,\,2\iota,\,\frac{16}{\alpha(n,\,p,\,\delta)}\}.
$$
It is easy to see that \eqref{eqno4.7} implies
\begin{equation}\label{eqno4.8}
\begin{aligned}
&e^{-\gamma_0}V^{\frac{2}{n}}\left(\int_{B_{R}}f^{(\gamma+1)\lambda}\eta^{2\lambda}\right)^{\frac{1}{\lambda}}+
4\gamma\alpha(n,\,p,\,\delta)R^2\int_{B_{R}}f^{\gamma+2}\eta^2\\
\leq&16(n-1)\gamma \left(\kappa R^2+1\right)\int_{B_{R}} f^{\gamma+1}\eta^2 +66R^2\int_{B_{R}}f^{\gamma+1}\left|\nabla\eta\right|^2.
\end{aligned}
\end{equation}
Noting that
$$
\frac{16}{\alpha(n,\,p,\,\delta)}\geq8(n-1)\geq 8
$$
and
$$
16(n-1)\left(\kappa R^2+1\right)\leq\left[c_{n,\,p}\left(1+\sqrt\kappa R\right)\right]^2=\gamma_0^2,
$$
from (\ref{eqno4.8}) we infer the following
\begin{equation}\label{eqno4.9}
\begin{aligned}
&e^{-\gamma_0}V^{\frac{2}{n}}\left(\int_{B_{R}}f^{(\gamma+1)\lambda}\eta^{2\lambda}\right)^{\frac{1}{\lambda}}+
4\gamma\alpha(n,\,p,\,\delta)R^2\int_{B_{R}}f^{\gamma+2}\eta^2\\
\leq&\gamma_0^2\gamma \int_{B_{R}} f^{\gamma+1}\eta^2 +66R^2\int_{B_{R}}f^{\gamma+1}\left|\nabla\eta\right|^2.
\end{aligned}
\end{equation}
Thus, we finish the proof of this lemma.
\end{proof}

The above inequality will lead to a local estimate of $f$ stated in the following lemma, which will play a key role on the following arguments.

\begin{lem}\label{lem4.2}
Let $\gamma_1=(\gamma_0+1)\lambda$. Then there exist a universal constant $c>0$ such that the following estimate holds
\begin{equation}\label{eqno4.10}
\begin{aligned}
\|f\|_{L^{\gamma_1}\left(B_{3R/4}\right)}
\leq& \frac{c}{\alpha(n,\,p,\,\delta)} V^{\frac{1}{\gamma_1}}\frac{\gamma_0^2}{R^2}.
\end{aligned}
\end{equation}

\end{lem}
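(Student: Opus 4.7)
The plan is to follow the argument of Lemma \ref{lem3.1} essentially verbatim, with $\theta_0$ replaced by $\gamma_0$ and $\tilde\rho(n,p)$ replaced by $\alpha(n,p,\delta)$ throughout. The starting point is inequality \eqref{eqno4.9} specialized to $\gamma=\gamma_0$, giving
\[
e^{-\gamma_0} V^{2/n}\left(\int_{B_R} f^{(\gamma_0+1)\lambda}\eta^{2\lambda}\right)^{1/\lambda}+4\gamma_0\alpha(n,p,\delta)R^2\int_{B_R}f^{\gamma_0+2}\eta^2\leq\gamma_0^3\int_{B_R}f^{\gamma_0+1}\eta^2+66R^2\int_{B_R}f^{\gamma_0+1}|\nabla\eta|^2.
\]
Denote the two terms on the right by $R_1$ and $R_2$, and the second term on the left by $L_2$. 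The task reduces to extracting, on the right, only quantities that can either be absorbed into $L_2$ or are explicitly controlled by the volume $V$ and the parameters $\gamma_0,\alpha(n,p,\delta),R$.

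First I would estimate $R_1$ by a pointwise dichotomy: split $B_R$ into the region where $f\geq \gamma_0^2/\bigl(2\alpha(n,p,\delta)R^2\bigr)$ and its complement. On the former, $R_1$ is bounded by $L_2/2$ and hence absorbed; on the latter, one has the pointwise bound $f^{\gamma_0+1}\leq \bigl(\gamma_0^2/(\alpha R^2)\bigr)^{\gamma_0+1}$, so $R_1$ contributes at most $\gamma_0^3 \bigl(\gamma_0^2/(\alpha R^2)\bigr)^{\gamma_0+1} V$. This is exactly the step \eqref{eqno3.9} of Lemma \ref{lem3.1}.

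Next I would handle $R_2$ by making the concrete choice $\eta=\eta_0^{\gamma_0+2}$, where $\eta_0\in C_0^\infty(B_R)$ equals $1$ on $B_{3R/4}$ and satisfies $|\nabla\eta_0|\leq 8/R$. A direct calculation gives $66R^2|\nabla\eta|^2\leq c\,\gamma_0^2\,\eta^{2(\gamma_0+1)/(\gamma_0+2)}$ for a universal constant $c$. Applying H\"older's inequality with exponents $(\gamma_0+2)/(\gamma_0+1)$ and $\gamma_0+2$, followed by Young's inequality with the balancing parameter
\[
t=\left[\frac{2(\gamma_0+2)\gamma_0\alpha(n,p,\delta)R^2}{\gamma_0+1}\right]^{(\gamma_0+1)/(\gamma_0+2)},
\]
splits $R_2$ as $L_2/2$ plus $c^{\gamma_0+2} V\bigl(\gamma_0^2/(\alpha R^2)\bigr)^{\gamma_0+1}$, mirroring \eqref{eqno3.10}.

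Combining the two bounds cancels $L_2$ entirely from both sides and yields
\[
e^{-\gamma_0} V^{2/n}\left(\int_{B_R} f^{(\gamma_0+1)\lambda}\eta^{2\lambda}\right)^{1/\lambda}\leq V\left(\frac{\gamma_0^2}{\alpha(n,p,\delta)R^2}\right)^{\gamma_0+1}\bigl(\gamma_0^3+c^{\gamma_0+2}\bigr).
\]
Taking the $\gamma_1$-th root with $\gamma_1=(\gamma_0+1)\lambda$, using $(x+y)^s\leq x^s+y^s$ for $0<s<1$ and the boundedness of $t\mapsto t^{3/t}$ on $(0,\infty)$, produces the desired inequality \eqref{eqno4.10}. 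I do not expect a serious obstacle here: the only delicate moment is selecting the Young parameter $t$ so that the $f^{\gamma_0+2}$ contributions cancel rather than accumulate, but this is an algebraic choice identical in spirit to the one already carried out in Lemma \ref{lem3.1}, and the positivity of $\alpha(n,p,\delta)$ guaranteed by Lemma \ref{lem4.1} is all that is needed for it to work.
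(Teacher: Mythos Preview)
Your proposal is correct and follows essentially the same approach as the paper: specialize \eqref{eqno4.9} to $\gamma=\gamma_0$, bound $R_1$ by the pointwise dichotomy on $f$, bound $R_2$ via the cutoff $\eta=\eta_0^{\gamma_0+2}$ together with H\"older and Young, absorb both into $L_2$, and then take the $\gamma_1$-th root. The paper's argument is identical in structure and in the choice of constants.
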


\begin{proof}
Letting $\gamma=\gamma_0$ in (\ref{eqno4.9}), we can derive
\begin{equation}\label{eqno4.11}
\begin{aligned}
&e^{-\gamma_0}V^{\frac{2}{n}}\left(\int_{B_{R}}f^{(\gamma_0+1)\lambda}\eta^{2\lambda}\right)^{\frac{1}{\lambda}}+
4\gamma_0\alpha(n,\,p,\,\delta)R^2\int_{B_{R}}f^{\gamma_0+2}\eta^2\\
\leq&\gamma_0^3 \int_{B_{R}} f^{\gamma_0+1}\eta^2 +66R^2\int_{B_{R}}f^{\gamma_0+1}\left|\nabla\eta\right|^2.
\end{aligned}
\end{equation}

Note that if
$$f\geq\frac{\gamma_0^2}{2\alpha(n,\,p,\,\delta)R^2},$$
then
$$\gamma_0^3 \int_{B_{R}} f^{\gamma_0+1}\eta^2\leq 2\gamma_0\alpha(n,\,p,\,\delta)R^2\int_{B_{R}}f^{\gamma_0+2}\eta^2.$$
On the other hand, if
$$f<\frac{\gamma_0^2}{2\alpha(n,\,p,\,\delta)R^2},$$
then
$$\gamma_0^3 \int_{B_{R}} f^{\gamma_0+1}\eta^2<\gamma_0^3\left(\frac{\gamma_0^2}{\alpha(n,\,p,\,\delta)R^2}\right)^{\gamma_0+1}V.$$
Therefore, there always holds
\begin{equation}\label{eqno4.12}
\gamma_0^3 \int_{B_{R}} f^{\gamma_0+1}\eta^2\leq
2\gamma_0\alpha(n,\,p,\,\delta)R^2\int_{B_{R}}f^{\gamma_0+2}\eta^2+\gamma_0^3\left(\frac{\gamma_0^2}{\alpha(n,\,p,\,\delta)R^2}\right)^{\gamma_0+1}V.
\end{equation}

Now, we choose $\eta_0\in C_0^{\infty}\left(B_R\right)$ such that
$$
\begin{cases}
0\leq\eta_0\leq1,&\mbox{on}~B_{R},\\
\eta_0=1,&\mbox{on}~B_{3R/4},\\
\left|\nabla\eta_0\right|\leq\frac{8}{R}. &
\end{cases}
$$
And let $\eta=\eta_0^{\gamma_0+2}$, direct computation yields
$$
66R^2\int_{B_{R}}f^{\gamma_0+1}\left|\nabla\eta\right|^2\leq c\gamma_0^2\int_{B_{R}}f^{\gamma_0+1}\eta^{\frac{2(\gamma_0+1)}{\gamma_0+2}},
$$
where $c$ is a universal positive constant. Furthermore, by H\"{o}lder inequalityand Young's inequality, we have
$$
\begin{aligned}
c\gamma_0^2\int_{B_{R}}f^{\gamma_0+1}\eta^{\frac{2(\gamma_0+1)}{\gamma_0+2}}
\leq c\gamma_0^2\left( \int_{B_{R}}f^{\gamma_0+2}\eta^2\right)^{\frac{\gamma_0+1}{\gamma_0+2}}V^{\frac{1}{\gamma_0+2}}
\end{aligned}
$$
and
$$
\begin{aligned}
c\gamma_0^2\left( \int_{B_{R}}f^{\gamma_0+2}\eta^2\right)^{\frac{\gamma_0+1}{\gamma_0+2}}V^{\frac{1}{\gamma_0+2}}
\leq2\gamma_0\alpha(n,\,p,\,\delta)R^2\int_{B_{R}}f^{\gamma_0+2}\eta^2+ c^{\gamma_0+2}V \left(\frac{\gamma_0^2}{\alpha(n,\,p,\,\delta)R^2}\right)^{\gamma_0+1}.
\end{aligned}
$$
Therefore, we can derive
\begin{equation}\label{eqno4.13}
66R^2\int_{B_{R}}f^{\gamma_0+1}\left|\nabla\eta\right|^2\leq 2\gamma_0\alpha(n,\,p,\,\delta)R^2\int_{B_{R}}f^{\gamma_0+2}\eta^2+ c^{\gamma_0+2}V\left(\frac{\gamma_0^2}{\alpha(n,\,p,\,\delta)R^2}\right)^{\gamma_0+1}.
\end{equation}

Now, by substituting (\ref{eqno4.12}) and (\ref{eqno4.13}) into (\ref{eqno4.11}), we have
$$
\begin{aligned}
e^{-\gamma_0}V^{\frac{2}{n}}\left(\int_{B_{R}}f^{(\gamma_0+1)\lambda}\eta^{2\lambda}\right)^{\frac{1}{\lambda}}
\leq&\gamma_0^3\left(\frac{\gamma_0^2}{\alpha(n,\,p,\,\delta)R^2}\right)^{\gamma_0+1}V
+ c^{\gamma_0+2}V\left(\frac{\gamma_0^2}{\alpha(n,\,p,\,\delta)R^2}\right)^{\gamma_0+1}\\
=&V\left(\frac{\gamma_0^2}{\alpha(n,\,p,\,\delta)R^2}\right)^{\gamma_0+1}\left(\gamma_0^3+c^{\gamma_0+2}\right).
\end{aligned}
$$
This implies
$$
\begin{aligned}
\left(\int_{B_{R}}f^{(\gamma_0+1)\lambda}\eta^{2\lambda}\right)^{\frac{1}{\lambda}}
\leq&e^{\gamma_0}V^{1-\frac{2}{n}}\left(\frac{\gamma_0^2}{\alpha(n,\,p,\,\delta)R^2}\right)^{\gamma_0+1}\left(\gamma_0^3+c^{\gamma_0+2}\right).
\end{aligned}
$$
Thus, by a similar argument with the above section we have
$$
\begin{aligned}
\|f\|_{L^{\gamma_1}\left(B_{3R/4}\right)}
\leq&e^{\frac{\gamma_0}{\gamma_0+1}}V^{\frac{1}{\gamma_1}}\frac{\gamma_0^2}{\alpha(n,\,p,\,\delta)R^2}\left(\gamma_0^3
+c^{\gamma_0+2}\right)^{\frac{1}{\gamma_0+1}}\\
\leq& cV^{\frac{1}{\gamma_1}}\frac{\gamma_0^2}{\alpha(n,\,p,\,\delta)R^2},
\end{aligned}
$$
where $\gamma_1=(\gamma_0+1)\lambda$. The proof of Lemma \ref{lem4.2} is completed.
\end{proof}
{\bf Proof of Theorem \ref{thm2}} In the present situation, we can infer from (\ref{eqno4.9}) that
$$
\begin{aligned}
e^{-\gamma_0}V^{\frac{2}{n}}\left(\int_{B_{R}}f^{(\gamma+1)\lambda}\eta^{2\lambda}\right)^{\frac{1}{\lambda}}
\leq&66\int_{B_{R}} f^{\gamma+1}\left(\gamma_0^2\gamma \eta^2 +R^2\left|\nabla\eta\right|^2\right),
\end{aligned}
$$
if
$a>0,\,b>0,\, \max\{-a,\,-b\}<\mu<0,\,q\geq1$ and$$0<p<\frac{4(1-\delta)}{n-1},$$
where $\delta=\frac{\mu}{\max\{-a,\,-b\}}$. Hence, it follows that
\begin{equation}\label{eqno4.14}
\left(\int_{B_{R}}f^{(\gamma+1)\lambda}\eta^{2\lambda}\right)^{\frac{1}{\lambda}}
\leq 66e^{\gamma_0}V^{-\frac{2}{n}}\int_{B_{R}} f^{\gamma+1}\left(\gamma_0^2\gamma\eta^2  +R^2\left|\nabla\eta\right|^2\right).
\end{equation}
Now we take an increasing sequence $\{\gamma_k\}_{k=1}^{\infty}$ such that
$$
\gamma_1=(\gamma_0+1)\lambda\quad\mbox{and}\quad\gamma_{k+1}=\gamma_{k}\lambda,\quad k=1,\,2,\,...,
$$
and choose the same $\{r_k\}_{k=1}^{\infty}$  and $\{\eta_k\}_{k=1}^{\infty}\subset C_0^{\infty}(B_R)$ as those chosen in Section 3, i.e.,
$$
r_k=\frac{R}{2}+\frac{R}{4^k},\quad k=1,\,2,\,...,
$$
and
$$
\eta_k\in C_0^{\infty}(B_{r_k}),\quad \eta_k=1~\mbox{in}~B_{r_{k+1}}\quad\mbox{and}\quad\left|\nabla\eta_k\right|\leq\frac{4^{k+1}}{R}.
$$
By letting $\gamma+1=\gamma_k$ and $\eta=\eta_k$ in (\ref{eqno4.14}), we derive
$$
\begin{aligned}
\left(\int_{B_{R}}f^{\gamma_k\lambda}
\eta_k^{2\lambda}\right)^{\frac{1}{\lambda}}
\leq& c e^{\gamma_0}V^{-\frac{2}{n}}\gamma_0^3 16^k \int_{B_{r_{k}}} f^{\gamma_k}.
\end{aligned}
$$

Thus,
$$
\begin{aligned}
\left(\int_{B_{r_{k+1}}}f^{\gamma_{k+1}}\right)^{\frac{1}{\gamma_{k+1}}}
\leq& \left(c e^{\gamma_0}V^{-\frac{2}{n}}\gamma_0^3\right)^{\frac{1}{\gamma_k}} 16^{\frac{k}{\gamma_k}}\left( \int_{B_{r_{k}}} f^{\gamma_k}\right)^{\frac{1}{\gamma_k}},
\end{aligned}
$$
where $c$ is a universal positive constant. This means that
$$
\|f\|_{L^{\gamma_{k+1}}\left(B_{r_{k+1}}\right)}\leq
\left(ce^{\gamma_0}V^{-\frac{2}{n}}\gamma_0^3\right)^{\frac{1}{\gamma_{k}}}16^{\frac{k}{\gamma_k}} \|f\|_{L^{\gamma_{k}}\left(B_{r_{k}}\right)}.
$$
Hence, by iteration we have
\begin{equation}\label{eqno4.15}
\|f\|_{L^{\gamma_{k+1}}\left(B_{r_{k+1}}\right)}\leq
\left(ce^{\gamma_0}V^{-\frac{2}{n}}\gamma_0^3\right)^{\sum_{i=1}^{k}\frac{1}{\gamma_{i}}}16^{\sum_{i=1}^{k}\frac{i}{\gamma_i}} \|f\|_{L^{\gamma_{1}}\left(B_{3R/4}\right)}.
\end{equation}
As before we have
$$
\sum_{i=1}^{\infty}\frac{1}{\gamma_{i}}=\frac{n}{2\gamma_1}
\quad\quad~~\mbox{and}~~\quad\quad
\sum_{i=1}^{\infty}\frac{i}{\gamma_{i}}=\frac{n^2}{4\gamma_1}.
$$
In view of the above two facts, we let $k\rightarrow\infty$ in (\ref{eqno4.15}) to obtain the following
$$
\|f\|_{L^{\infty}\left(B_{R/2}\right)}\leq c(n)V^{-\frac{1}{\gamma_1}} \|f\|_{L^{\gamma_{1}}\left(B_{3R/4}\right)}.
$$
Hence, by Lemma \ref{lem4.2}, we conclude from the above inequality that
$$
\|f\|_{L^{\infty}\left(B_{R/2}\right)}\leq c(n) \frac{\gamma_0^2}{\alpha(n,\,p,\,\delta)R^2}.
$$
The definition of $\gamma_0$ and $\alpha(n,\,p,\,\delta)$ tell us that
$$
\|f\|_{L^{\infty}\left(B_{R/2}\right)}
\leq c\left(n,\,p,\,\delta\right)\frac{\left(1+\sqrt\kappa R\right)^2}{R^2}.
$$
Thus, we complete the proof of Theorem \ref{thm2}.\qed
\medskip

{\bf Proof of Corollary \ref{cor2}}
 Let $(M,\,g)$ be an n-dimensional noncompact complete Riemannian manifold with a nonnegative Ricci curvature. We assume $v$ is a smooth and positive solution of (\ref{eqno1.1}) with $a>0$, $b>0$, $\max\{-a,\,-b\}<\mu<0$. Then, for any $q\geq1$ and $p$ which satisfies that
 $$0< p<\frac{4(1-\delta)}{n-1},$$
where $\delta=\frac{\mu}{\max\{-a,\,-b\}}$, Theorem \ref{thm2} tells us that for any $B_{R}\subset M$, there holds
$$
\frac{\left|\nabla v\right|^2}{v^2}\leq\frac{c(n,\,p,\,\delta)}{R^2},\quad\mbox{on}~B_{R/2}.
$$
Now, letting $R\rightarrow\infty$ yields $\nabla v=0$. Therefore, $v$ is a positive constant on $M$.

On the other hand, when $a>0,\,b>0$, $\max\{-a,\,-b\}<\mu<0$,  $q\geq 1$ and $0< p<\frac{4\left(1-\delta\right)}{n-1}$, we have
$$
\Delta v +\mu v+av^{p+1}+bv^{-q+1}=v\left(\mu+av^{p}+bv^{-q}\right)\neq 0.
$$
This also means that $v$ could not be the solution to (\ref{eqno1.1}). Hence we know that (\ref{eqno1.1}) does not admit any positive solution. Thus we complete the proof of Corollary \ref{cor2}.
\medskip

As for Theorem \ref{thm3} and \ref{thm4} on the $n$-dimensional Einstein-scalar field Lichnerowicz equation $(n\geq3)$, we just need to let
$$p=\frac{4}{n-2}\quad\mbox{and}\quad q=\frac{4(n-1)}{n-2},$$
their proof follows immediately.

\section{Some applications and comments}
Now we recall some backgrounds on Lichnerowicz equation in relativity. A class of important problem is to construct solutions of the general relativistic vacuum constraint equations:
\begin{align}
R(g) = 2\Lambda + |K|_g^2 (\mbox{tr}_gK)^2,\label{5.1}\\
\mbox{div}_g K + \nabla \mbox{tr}_gK = 0,
\end{align}
so that the initial data set $(M, g, K)$ contains ends of cylindrical type, asymptotically flat type and asymptotically hyperbolic type. Here $K$ is a symmetric 2-tensor on $M$.

Fix a cosmological constant $\Lambda$ and a symmetric tensor field $\tilde{L}$ on a Riemannian manifold $(M, \tilde{g})?$, as well as a smooth bounded function $\tau$. This last function represents the trace of the extrinsic curvature tensor. We also do not need to assume that $\tilde{L}$ is transverse traceless, though it is in the application to the constraint equations. To simplify
notation, set
$$\tilde{\sigma}^2 := \frac{n-2}{4(n-1)}|\tilde{L}|^2_{\tilde{g}}, \quad\quad \beta := \left[\frac{n-2}{4n}\tau^2 -\frac{n-2}{2(n-1)} \Lambda\right],$$
which is convenient since only these quantities, rather than $\tilde{L}$,  $\tau$ or $\Lambda$, appear in the constraint equations. The symbol $\tilde{\sigma}^2$ is meant to remind the reader that this function is nonnegative, but is slightly misleading since $\tilde{\sigma}^2$ may not be the square of a smooth function. All of these functions are as regular as the metric $g$ and the extrinsic curvature $K$.

The following equation ia usually called (Einstein-scalar field) Lichnerowicz equation
$$\Delta_{\tilde{g}}\tilde{\phi}- \frac{n-2}{4(n-1)}\tilde{R}\tilde{\phi}= \beta\tilde{\phi}^{(n+2)/(n-2)}- \tilde{\sigma}^2\tilde{\phi}^{-(3n-2)/(n-2)},$$
which corresponds to \eqref{5.1}. For the sake of convenience, one likes to write it more simply as
\begin{align}\label{5.3}
L_{\tilde{g}}\tilde{\phi}=\beta\tilde{\phi}^\alpha -\tilde{\sigma}^2\tilde{\phi}^{-\gamma}.
\end{align}
Here, $L_{\tilde{g}}$ denotes the conformal Laplacian,
$$L_{\tilde{g}} = \Delta_{\tilde{g}}- \frac{n - 2}{4(n-1)}\tilde{R}$$
and one also always set
$$c(n) = \frac{n - 2}{4(n-1)}, \quad \alpha = \frac{n + 2}{n-2}\quad\text{and}\quad \gamma = \frac{3n-2}{n-2}.$$
An important property of \eqref{5.3} is the following conformal transformation property. Suppose that $\hat{g} = u^{\frac{4}{(n-2)}}\tilde{g}$. It is well known that for any function $\hat{\phi}$, it
holds that
$$L_{\tilde{g}} (u\hat{\phi}) = u^\alpha L_{\hat{g}}(\hat{\phi}),$$
where $L_{\hat{g}}$ is the conformal Laplacian associated with $\hat{g}$. Thus, if we substitute $\tilde{\phi}= u\hat{\phi}$ into \eqref{5.3}, this last equation becomes
$$u^\alpha L_{\hat{g}}\hat{\phi} = \beta u^\alpha \hat{\phi}^\alpha - \tilde{\sigma}^2u^{-\gamma}\hat{\phi}^{-\gamma}.$$
Dividing by $u^\alpha$ and defining
$$\hat{\sigma}^2 = u^{-\gamma-\alpha}\tilde{\sigma}^2,$$
then we have simply that
$$L_{\hat{g}}\hat{\phi} = \beta\hat{\phi}^\alpha - \hat{\sigma}^2\hat{\phi}^{-\gamma}.$$
Note in particular that while ?$\tilde{\sigma}^2$ transforms by a power of $u$, the coefficient
function $\beta$ is the same in the transformed equation.

We know that there are several classes of general relativistic initial data sets, which have been extensively studied, including: compact manifolds, manifolds with asymptotically flat ends, and manifolds with asymptotically hyperbolic ends.

There are also another types of interesting asymptotic geometry, which appear naturally in general relativistic studies, namely: manifolds with ends of cylindrical type, asymptotically periodic type, with cylindrically bounded type. For instance, P.T. Chru\'sciel and R. Mazzeo \cite{CM} have ever studied the existence of solutions $\tilde{\phi}$ to \eqref{5.3} with controlled asymptotic behavior in the following cases:

A. $(M,\tilde{g})$ is a complete manifold with a finite number of ends of cylindrical type;

B. $(M,\tilde{g})$ is a complete manifold with a finite number of asymptotically conic ends and a finite number of ends of cylindrical type, with $\tilde{R}\geq 0$;

C. $(M,\tilde{g})$ is a complete manifold with a finite number of ends of cylindrical type and a finite number of asymptotically hyperbolic ends. In this case, we further assume that $\tilde{R}<0$ sufficiently far out on all ends and hence
require that $\beta > 0$ if $\tau = const$.

For more details we refer to Theorem 4.4 and Theorem 4.5 in \cite{CM}.
\medskip

It is not difficult to see that  Theorem \ref{thm3} implies the following

\begin{thm}
Suppose that $\beta > 0$ and $\tilde{\sigma}^2$ are two positive constants, and $B_{2R}\in M$ be a geodesic ball, where $(M,\,\tilde{g})$ is a complete Riemannian manifold with $Ric_{\tilde{g}} \geq -(n-1)\kappa$. Assume that the scalar curvature $\tilde{R}|_{B_{2R}}\leq 0$ is a constant. If $v$ is a smooth positive solution of the Einstein-scalar field Lichnerowicz equation \eqref{5.3} on the geodesic ball $B_{2R}\subset M$, then, on $B_{R}$ there holds
$$
\frac{\left|\nabla v\right|^2}{v^2}\leq  c(n, \tilde{R}, \beta, \tilde{\sigma}^2)\frac{\left(1+\sqrt\kappa R\right)^2}{R^2},
$$
where $c(n, \tilde{R}, \beta, \tilde{\sigma}^2)$ is a constant.

Moreover, if $(M,\,g)$ is a noncompact complete Ricci flat manifold, $\beta > 0$ and $\tilde{\sigma}^2$ are constants, then any positive solution to the Einstein-scalar field Lichnerowicz equation is a constant.
\end{thm}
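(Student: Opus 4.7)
The plan is to recast the Einstein-scalar field Lichnerowicz equation \eqref{5.3} into the model form \eqref{eqno1.1} and then invoke Theorem \ref{thm3} directly. Writing $L_{\tilde g}=\Delta_{\tilde g}-c(n)\tilde R$ and rearranging \eqref{5.3}, we see that any positive solution $v=\tilde\phi$ satisfies
\begin{equation*}
\Delta_{\tilde g} v+\mu v+ a v^{p+1}+b v^{-q+1}=0,
\end{equation*}
with the identifications
\begin{equation*}
\mu=-c(n)\tilde R,\qquad a=-\beta,\qquad b=\tilde\sigma^{2},\qquad p=\tfrac{4}{n-2},\qquad q=\tfrac{4(n-1)}{n-2}.
\end{equation*}

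Next I would verify the hypotheses of Theorem \ref{thm3}. Since $\tilde R\leq 0$ is constant on $B_{2R}$ and $c(n)=\tfrac{n-2}{4(n-1)}>0$, we have $\mu\geq 0$; since $\beta>0$, we have $a\leq 0$; and $b=\tilde\sigma^{2}\geq 0$. Moreover $q=\tfrac{4(n-1)}{n-2}\geq 1$. Thus all structural assumptions of Theorem \ref{thm3} are met. Applying that theorem on the geodesic ball $B_{2R}$ (so that the conclusion yields the estimate on $B_{R}$) produces
\begin{equation*}
\frac{|\nabla v|^{2}}{v^{2}}\leq c(n)\,\frac{\left(1+\sqrt\kappa R\right)^{2}}{R^{2}}\qquad\text{on }B_{R}.
\end{equation*}
The constant $c$ in Theorem \ref{thm3} for this specific exponent $p=4/(n-2)$ depends only on $n$; to allow the user to absorb later perturbations, we can write it as $c(n,\tilde R,\beta,\tilde\sigma^{2})$, which trivially includes the case of no dependence on $\tilde R$, $\beta$, $\tilde\sigma^{2}$.

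For the Liouville conclusion, if $(M,\tilde g)$ is a noncompact complete Ricci flat manifold then in particular $Ric_{\tilde g}\geq 0$ and the scalar curvature vanishes, so $\mu=0$ is a (constant) nonnegative coefficient. With $a=-\beta\leq 0$ and $b=\tilde\sigma^{2}\geq 0$ constant, the second half of Theorem \ref{thm3} (equivalently the last assertion of Corollary \ref{cor1}, noting $b+\mu=\tilde\sigma^{2}>0$ and $p=4/(n-2)>0$) applies and forces $v$ to be a positive constant.

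The only mild subtlety, and the place where one should be careful, is to record precisely what the constant in Theorem \ref{thm3} depends on: when $p=4/(n-2)$ is fixed by $n$, the constant $c(n,p)$ from Theorem \ref{thm1} reduces to a constant depending only on $n$, and since the proof of Theorem \ref{thm3} never uses the size of $\mu$, $a$, $b$ once the sign assumptions are fixed, the dependence on $\tilde R,\beta,\tilde\sigma^{2}$ in the statement is only nominal. No additional estimate is required; the result is a direct corollary.
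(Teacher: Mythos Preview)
Your proposal is correct and matches the paper's own approach exactly: the paper simply states that ``Theorem~\ref{thm3} implies the following'' without further argument, and you have supplied precisely the routine verification---rewriting \eqref{5.3} as \eqref{eqno1.1} with $\mu=-c(n)\tilde R\geq 0$, $a=-\beta\leq 0$, $b=\tilde\sigma^{2}\geq 0$, and checking that Ricci-flatness forces $\tilde R=0$ globally for the Liouville part. Your observation that the dependence of the constant on $\tilde R,\beta,\tilde\sigma^{2}$ is only nominal is also correct and worth noting.
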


How to get a similar gradient estimate for the positive solution to Lichnerowicz equation \eqref{5.3} with variable coefficients is a hard problem. In a forthcoming paper we will discuss this problem.

\medskip

{\bf Acknowledgments:} Y. Wang is supported partially by National Natural Science Foundation of China (Grant No.11971400) and National Key Research and Development Projects of China (Grant No.2020YFA0712500).
\bigskip

\end{document}